\def\@tocline#1#2#3#4#5#6#7{\relax
  \ifnum #1>\c@tocdepth 
  \else
    \par \addpenalty\@secpenalty\addvspace{#2}%
    \begingroup \hyphenpenalty\@M
    \@ifempty{#4}{%
      \@tempdima\csname r@tocindent\number#1\endcsname\relax
    }{%
      \@tempdima#4\relax
    }%
    \parindent\z@ \leftskip#3\relax \advance\leftskip\@tempdima\relax
    \rightskip\@pnumwidth plus4em \parfillskip-\@pnumwidth
    #5\leavevmode\hskip-\@tempdima
      \ifcase #1
       \or\or \hskip 1em \or \hskip 2em \else \hskip 3em \fi%
      #6\nobreak\relax
    \dotfill\hbox to\@pnumwidth{\@tocpagenum{#7}}\par
    \nobreak
    \endgroup
  \fi}
\newtheorem{thm}{Theorem}[section]
\newtheorem{theorem}[thm]{Theorem} \newtheorem{proposition}[thm]{Proposition} 
\newtheorem{lemma}[thm]{Lemma}
\newtheorem{corollary}[thm]{Corollary}
\newtheorem{prop}[thm]{Proposition}
\newtheorem{thmx}{Theorem}
\newtheorem{corx}[thmx]{Corollary}
\newtheorem{propx}[thmx]{Proposition}
\theoremstyle{definition}
\newtheorem{definition}[thm]{Definition}
\newtheorem{remark}[thm]{Remark}
\newtheorem{question}[thm]{Question}
\newtheorem{example}[thm]{Example}
\DeclareMathOperator{\Comm}{\mathrm{Comm}}
\DeclareMathOperator{\dist}{\mathsf{dist}}
\DeclareMathOperator{\Hdist}{\mathsf{hdist}}
\newcommand{\cala}{{\mathcal{A}}}
\newcommand{\calh}{{\mathcal{H}}}
\newcommand{\calp}{{\mathcal{P}}}
\newcommand{\calP}{{\mathcal P}}
\newcommand{\calq}{{\mathcal{Q}}}
\newcommand{\ZZ}{\mathbb{Z}}
\newcommand*{\Ad}{\textrm{Ad}}
\tikzstyle{blackNode}=[fill=black, draw=black, shape=circle]
\tikzset{snake it/.style={decorate, decoration=snake}}
\title{Hyperbolically embedded subgroups and quasi-isometries of pairs}
\author{Sam Hughes}
\email{sam.hughes@maths.ox.ac.uk}
\address[Sam Hughes]{Mathematical Institute, Andrew Wiles Building, University of Oxford, Oxford, OX2 6GG, UK}
\author{Eduardo Mart\'inez-Pedroza}
\address[Eduardo Mart\'inez-Pedroza]{Department of Mathematics and Statistics, Memorial University of Newfoundland, St. John's, NL, Canada}
\email{emartinezped@mun.ca}
\date{\today}
\subjclass{20F65, 20F67}
\begin{document}

\maketitle
\begin{abstract}
    We give technical conditions for a quasi-isometry of pairs to preserve a subgroup being hyperbolically embedded.  We consider applications to the quasi-isometry and commensurability invariance of acylindrical hyperbolicity of finitely generated groups.
    
\end{abstract}

\section{Introduction}
A group $G$ is \emph{acylindrically hyperbolic} if it admits a non-elementary, acylindrical action on a hyperbolic space.  An alternative characterisation is that $G$ is acylindrically hyperbolic if and only if $G$ contains a \emph{hyperbolically embedded subgroup} $H$, denoted $H\hookrightarrow_h G$, we will give a characterisation from \cite{MPR2021} in \Cref{prop:HypEmbHatGamma}.   

The class of acylindrically hyperbolic groups generalises the classes of non-elementary hyperbolic and relatively hyperbolic groups whilst sharing many similar properties \cite{Osin2016}.  In spite of this there are still foundational questions that remain open, for instance, it is known that a group being hyperbolic or relatively hyperbolic is invariant under quasi-isometry \cite{Gr1987} \cite{Dr2009}, but the corresponding question for acylindrical hyperbolicity is still open.

\begin{question} \cite[Question~2.20(a)]{Osin2019} \label{question.1}
Is the class of finitely generated acylindrically hyperbolic groups closed under quasi-isometry?
\end{question}

Some partial results are known, for instance acylindrical hyperbolicity passes to finite-index subgroups and is preserved by quotienting out a finite normal subgroup \cite{MiOs15}.  If the group is $\cala\calh$-accessible then acylindrical hyperbolicity can be passed to finite extensions \cite{MiOs15c}.  The property of being $\cala\calh$-accessible also passes to finite-index overgroups \cite{Ba2020}.  However, not every finitely presented acylindrically hyperbolic group is $\cala\calh$-accessible \cite[Theorem~2.18]{ABO2019}. Some experts in the field do not expect a complete positive answer to \Cref{question.1}.

This article relies on the notion of quasi-isometry of pairs, and our  results  provide technical conditions to ensure a quasi-isometry of pairs carries the property of being a hyperbolically embedded subgroup.

\begin{definition}[Quasi-isometry of  pairs]\label{defn:quasi-isometry-pairs}
Let $X$ and $Y$ be metric spaces, let $\mathcal{A}$ and $\mathcal{B}$ be   collections of subspaces of $X$ and $Y$ respectively.  A  quasi-isometry $q\colon X\to Y$ is a \emph{quasi-isometry  of pairs} $q\colon (X,\mathcal{A}) \to (Y,\mathcal{B})$ if there is $M>0$:
\begin{enumerate}
    \item For any $A\in \mathcal{A}$, 
    the set $\{ B\in \mathcal{B} \colon \Hdist_Y(q(A), B) <M \}$ is non-empty.      
    \item For any $B\in \mathcal{B}$, 
    the set $\{ A\in \mathcal{A} \colon \Hdist_Y(q(A), B) <M \}$ is non-empty. 
\end{enumerate}
In this case, if $q\colon X \to Y$ is a $(L,C)$-quasi-isometry, then $q\colon (X, \mathcal{A})\to (Y,\mathcal{B})$ is called a \emph{$(L,C,M)$-quasi-isometry}. If there is a quasi-isometry of pairs  $(X,\mathcal{A}) \to (Y,\mathcal{B})$ we say  that $(X,\mathcal{A})$ and  $(Y,\mathcal{B})$ are \emph{quasi-isometric pairs}.
\end{definition}

We specialise the previous definition to the case of   finitely generated groups with finite collections of subgroups as follows.

\begin{definition}[Quasi-isometry of group pairs]
Consider two pairs $(G, \mathcal{P})$ and $(H, \mathcal{Q})$ where $G$ and $H$ are finitely generated groups with chosen word metrics $\dist_G$ and $\dist_H$. Denote the Hausdorff distance between subsets of $H$ by $\Hdist_H$.
An $(L,C)$-quasi-isometry $q\colon G \to H$ is an \emph{$(L,C,M)$-quasi-isometry of  pairs}  $q\colon (G, \mathcal{P})\to (H, \mathcal{Q})$ if the relation
\begin{equation*}\label{eq:def-qi-relation} 
\dot{q} = \{ (A, B)\in G/\mathcal{P} \times H/\mathcal{Q} \colon \Hdist_H(q(A), B) <M  \} \end{equation*}
satisfies that the projections into $G/\mathcal{P}$ and $H/\mathcal{Q}$ are surjective. 
\end{definition} 

\begin{example}[Quasi-isometry of pairs and finite extensions]
Let $H$ be a finite index normal subgroup of finitely generated group $G$, and let $\calq$ be a finite collection of subgroups of $H$. Then the inclusion $(H,\calq) \hookrightarrow (G,\calq)$ is a quasi-isometry of pairs if the collection  $\{hQh^{-1} \colon h\in H \text{ and } Q\in \calq\}$ is invariant under conjugation by $G$, see \Cref{prop:FiniteIndexSupergroupIff}.
\end{example}

Recall that the \emph{commensurator} of a subgroup $P$ of a group $G$ is the subgroup \[Comm_G(P)=\{g\in G \colon P\cap gPg^{-1} \text{ is a finite index subgroup of $P$ and $gPg^{-1}$}\}.\]

\begin{definition}[Refinements]
 Let $\calp$ be a collection of subgroups of group $G$.  A \emph{refinement} $\calp^\ast$ of $\calp$ is a set of representatives of conjugacy classes of the collection of subgroups \[\{\Comm_G(gPg^{-1}) \colon P\in\calp \text{ and } g\in G \}.\]
 \end{definition} 

\begin{example}[Refinements and qi of pairs]
Let $\calq$ be a finite collection of subgroups of a finitely generated group $H$ and let $\calq^\ast$ be a refinement. If each $Q\in\calq$ is finite index in $\Comm_H(Q)$ then the identity map on $G$ is a quasi-isometry of pairs $(H,\calq)\to (H,\calq^\ast)$. 
\end{example}

\begin{example}[Refinements and finite extensions] \label{ex:refinement}
Let $A$ be a group, let  $\mathcal{H}$ be an almost malnormal collection of infinite subgroups, and let $F\leq \mathsf{Aut}(A)$ be a finite subgroup.  If $F$ acts freely on $\mathcal{H}$ and $\mathcal{H}_F$ is a collection of representatives of $F$-orbits in $\mathcal{H}$, then a refinement of $\mathcal{H}$ in $A\rtimes F$ is $\mathcal{H}_F$.
\end{example}

\begin{definition}[Reduced collections]
 A collection of subgroups $\calp$ of a group $G$ is \emph{reduced} if  
for any $P,Q\in \calp$ and $g\in G$, if $P$ and $gQg^{-1}$ are commensurable then  $P=Q$ and $g\in P$.
\end{definition}


Our first result, \Cref{propx:summary},  describes a strategy to obtain positive results to \Cref{question.1}.  For a group $G$ with a generating set $S$, let $\Gamma(G,S)$ denote the corresponding \emph{Cayley graph}, see \Cref{def:CayleyGraph}. 
 
\begin{thmx}[\Cref{prop:summary}]\label{propx:summary} 
Let $q\colon G\to H$ be a quasi-isometry of finitely generated groups, let $\calp$ and $\calq$ be finite collections of subgroups of $G$ and $H$ respectively, and let $S$ and $T$ be (not necessarily finite) generating sets of $G$ and $H$ respectively. Suppose 
\begin{enumerate}
    \item $q\colon (G,\calp) \to (H,\calq)$ is a quasi-isometry of pairs, and
    \item $q\colon  \Gamma(G,S) \to  \Gamma(H,T)$ is a quasi-isometry.
\end{enumerate}
The following statements hold:
\begin{enumerate}
    \item If $\calp$ and $\calq$ are reduced collections in $G$ and $H$ respectively; then $\calp\hookrightarrow_h (G,S)$ if and only if $\calq\hookrightarrow_h (G,T)$.
    \item If $\calq$ contains only infinite subgroups and  $\calq\hookrightarrow_h (H,T)$ then $\calp^*\hookrightarrow_h (G,S)$.
\end{enumerate}
\end{thmx}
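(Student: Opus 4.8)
The plan is to route everything through the characterisation of hyperbolic embeddedness in \Cref{prop:HypEmbHatGamma}, which I read as saying that $\calp \hookrightarrow_h (G,S)$ if and only if the coned-off graph $\hat{\Gamma}(G,S,\calp)$ --- the Cayley graph $\Gamma(G,S)$ with a cone point adjoined to each left coset of each $P \in \calp$ --- is hyperbolic and fine, and likewise for $\calq \hookrightarrow_h (H,T)$ and $\hat{\Gamma}(H,T,\calq)$. Thus both statements reduce to transporting hyperbolicity and fineness across $q$. The first step is to promote the data to a map $\hat q \colon \hat{\Gamma}(G,S,\calp) \to \hat{\Gamma}(H,T,\calq)$: on the Cayley vertices $\hat q$ is $q$, which is a quasi-isometry of the Cayley graphs by hypothesis $(2)$, and on cone points I use hypothesis $(1)$, namely the surjectivity of the projections of $\dot q$, to send the cone of a coset $A$ to a cone of some coset $B$ with $\Hdist_H(q(A),B)<M$. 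Since the points of a coned coset lie at distance one from its cone point, a routine estimate shows $\hat q$ is a quasi-isometry, and hyperbolicity, being a quasi-isometry invariant of geodesic spaces, transfers in both directions.

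The substantive point, and the step I expect to be the main obstacle, is the transfer of fineness, which is \emph{not} a quasi-isometry invariant in general: a quasi-isometry can merge distinct cone points and thereby create or destroy short circuits. This is exactly where the reducedness hypothesis of statement $(1)$ does its work. When $\calp$ and $\calq$ are reduced, no two distinct cosets in $G/\calp$ (respectively $H/\calq$) lie at finite Hausdorff distance, because Hausdorff-close cosets of subgroups have commensurable subgroups after conjugation, which reducedness forbids unless the cosets coincide. Consequently the coarse assignment $A \mapsto B$ above is, up to bounded error, a \emph{bijection} $G/\calp \to H/\calq$: single-valued and injective by reducedness of $\calq$ and of $\calp$ respectively, and surjective by the quasi-isometry-of-pairs hypothesis. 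With the cone points in coarse bijection, $\hat q$ matches a circuit of length $n$ through a cone point on one side with a circuit of comparable length through the corresponding cone point on the other, so fineness transfers both ways; combining this with the hyperbolicity transfer and invoking \Cref{prop:HypEmbHatGamma} proves statement $(1)$.

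For statement $(2)$ the collection $\calp$ is not assumed reduced, so the cone-point bijection is unavailable and one cannot simply quote statement $(1)$. I would argue as follows. First, $\calq \hookrightarrow_h (H,T)$ forces $\calq$ to be almost malnormal, and since every member of $\calq$ is infinite this upgrades to reducedness, with moreover $\Comm_H(Q)=Q$ for each $Q \in \calq$. Reducedness of the \emph{target} is all that is needed to make the assignment $A \mapsto B$ single-valued, so hyperbolicity still transports and $\hat{\Gamma}(G,S,\calp)$ is hyperbolic. It then remains to promote this to hyperbolic embeddedness of the refinement: passing from $\calp$ to $\calp^\ast$ replaces each coset by the coset of its commensurator, producing a reduced and self-commensurating collection, so that distinct peripheral cosets overlap in bounded sets. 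For such a collection the fineness of $\hat{\Gamma}(G,S,\calp^\ast)$ is governed by these bounded overlaps and should follow once hyperbolicity is known, whence \Cref{prop:HypEmbHatGamma} yields $\calp^\ast \hookrightarrow_h (G,S)$.

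The delicate point throughout is the control of fineness, and the two statements handle it differently: in $(1)$ reducedness on both sides gives a genuine coarse bijection of cone points, across which fineness is manifestly preserved, whereas in $(2)$ one has only a coarse surjection onto $\calq$, so fineness must be \emph{reconstructed} on the $G$-side after refining. I expect the bulk of the technical work to lie in verifying that $\hat q$ respects the coarse coset correspondence and in the fineness bookkeeping for the refinement; the transfer of hyperbolicity and the deduction of almost malnormality and $\Comm_H(Q)=Q$ from \Cref{prop:HypEmbHatGamma} are comparatively routine.
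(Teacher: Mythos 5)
Your argument for statement (1) is essentially the paper's: reducedness of both collections upgrades the coarse coset correspondence $\dot q$ to a bijection $G/\calp\to H/\calq$ (this is \Cref{prop:DotqIsFunction}), the induced map $\hat q$ on coned-off Cayley graphs is a quasi-isometry transporting both hyperbolicity and fineness at cone vertices (this is \Cref{prop:FineQI-2}), and \Cref{prop:HypEmbHatGamma} converts back. The paper simply cites these as black boxes where you re-derive them, so part (1) is fine.

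Your treatment of statement (2), however, has a genuine gap at exactly the point you flag as delicate. You establish hyperbolicity of $\hat\Gamma(G,\calp,S)$ using only reducedness of the target, and then assert that fineness of $\hat\Gamma(G,\calp^\ast,S)$ ``should follow once hyperbolicity is known'' because the refined collection has bounded coset overlaps. That inference is false in general: hyperbolicity of the coned-off graph together with almost malnormality (equivalently, bounded pairwise overlaps of peripheral cosets) does \emph{not} imply fineness at cone vertices --- fineness is precisely the extra condition separating $\calp^\ast\hookrightarrow_h(G,S)$ from mere hyperbolicity of the cone-off, and it is not recoverable from intersection patterns of cosets alone. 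So fineness on the $G$-side is never actually proved in your outline. The paper avoids this by never trying to reconstruct fineness intrinsically: it first shows that $\calp^\ast$ is reduced (almost malnormality of $\calq$ transfers to the refinement $\calp^\ast$ under the quasi-isometry of pairs, \Cref{prop:MalnormalityQIinvariance}) and that $q\colon(G,\calp^\ast)\to(H,\calq)$ is itself a quasi-isometry of pairs (composing with the identity $(G,\calp)\to(G,\calp^\ast)$ via \Cref{lem:QI-Refinement}, using that each $Q\in\calq$ is self-commensurating so each $P\in\calp$ is finite index in its commensurator), and \emph{then} applies statement (1) to the reduced pair $(\calp^\ast,\calq)$, so that fineness is imported from $\hat\Gamma(H,\calq,T)$ through the cone-point bijection rather than rebuilt on the $G$-side. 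Your proof of (2) would be repaired by making this reduction to (1) explicit; as written, the key step is asserted rather than proved.
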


\subsection*{Qi-characteristic collections}
The first numbered hypothesis of \Cref{propx:summary} raises the following general problem:  Given a finite collection of subgroups $\calq$ of a group $H$ and a quasi-isometry $q\colon G\to H$ of finitely generated groups, is there a collection $\calp$ of subgroups of $G$ such that $q\colon(G,\calp)\to(H,\calq)$ is a quasi-isometry of pairs?

This problem was studied in~\cite{MaSa21} where the notion of qi-characteristic collection is introduced and it is proved that if the collection $\calq$ is qi-characteristic in $H$, then any quasi-isometry of finitely generated groups induces a collection $\calp$. 

\begin{definition}[Qi-characteristic]\cite{MaSa21}\label{thm:qi-characteristic-groups2}
A collection of subgroups $\mathcal{P}$ of a finitely generated group $G$ is  \emph{quasi-isometrically characteristic}  (or shorter  \emph{qi-characteristic})  if $\mathcal{P}$ is finite; each $P\in \mathcal{P}$ has finite index in its commensurator; and for every $L\geq1$ and $C\geq0$ there is $M=M(G, \mathcal{P}, L,C)\geq0$ such that every  $(L,C)$-quasi-isometry $q\colon G\to G$ is an $(L,C,M)$-quasi-isometry of pairs $q\colon (G,\mathcal{P})\to (G,\mathcal{P})$.  \end{definition}

\begin{example}\label{ex qi-char}
The argument by  Behrstock, Dru\c{t}u and Mosher proving quasi-isometric rigidity of relative hyperbolicity with respect to non-relatively hyperbolic groups (NRH groups) shows that if $H$ is hyperbolic group relative to a collection  $\calq$ of NRH subgroups, then  $\calq$ is qi-characteristic~\cite[Theorems 4.1 and 4.8]{BDM09}. Another example is provided by mapping class groups. Ruling out a few surfaces of low complexity,  any self quasi-isometry of the mapping class group  is at uniform distance from left multiplication by an element of the group, see the work of Behrstock, Kleiner, Minsky and Mosher~\cite[Theorem 1.1]{BKMM12}. As a consequence, the hyperbolically embedded (virtually cyclic) subgroup generated by a pseudo-Anosov is qi-characteristic. More generally, any finite collection of subgroups of such mapping class groups are qi-characteristic. 
\end{example}

\begin{corx} \label{thmx.speculation}
 Let $G$ and $H$ be finitely generated groups, let $T$ be a generating set of $H$, let
  $\calq$ be a finite collection of  subgroups of $H$ such that $\calq\hookrightarrow_h (H,T)$, and let $q\colon G \to H$ be a quasi-isometry.
  If    
  \begin{enumerate}
      \item $\calq$ is a qi-characteristic collection of subgroups of $H$, and
      \item there is a generating set $S\subset G$ such that $q\colon \Gamma(G,S) \to \Gamma(H,T)$ is a quasi-isometry;
  \end{enumerate}
  then there is a finite collection $\calp$ of subgroups of $G$ such that $\calp\hookrightarrow_h (G,S)$ and $q\colon (G,\calp) \to (H,\calq)$ is a quasi-isometry of pairs.
\end{corx}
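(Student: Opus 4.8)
The plan is to derive the statement by feeding the construction of \cite{MaSa21} into \Cref{propx:summary}. First I would invoke \cite{MaSa21}: since $\calq$ is qi-characteristic in $H$ and $q\colon G\to H$ is a quasi-isometry of finitely generated groups, that work produces a finite collection $\calp_0$ of subgroups of $G$ for which $q\colon(G,\calp_0)\to(H,\calq)$ is a quasi-isometry of pairs. This is precisely hypothesis (1) of \Cref{propx:summary}, and hypothesis (2) is provided verbatim by assumption (2) of the corollary. Thus both hypotheses of \Cref{propx:summary} hold for the pair map $q\colon(G,\calp_0)\to(H,\calq)$ together with the generating sets $S$ and $T$.

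Next I would place myself in the situation of part (2) of \Cref{propx:summary}, which requires $\calq$ to consist of infinite subgroups. We may assume $H$ is infinite, as otherwise $G$ is finite and the statement is trivial. I claim that every $Q\in\calq$ is then infinite. Indeed, if some $Q$ were finite, every element of $H$ would commensurate $Q$, so $\Comm_H(Q)=H$; but the qi-characteristic hypothesis forces $Q$ to have finite index in $\Comm_H(Q)=H$, making $H$ finite, a contradiction. Hence $\calq$ consists of infinite subgroups and $\calq\hookrightarrow_h(H,T)$, so part (2) of \Cref{propx:summary} applies and yields $\calp_0^*\hookrightarrow_h(G,S)$, where $\calp_0^*$ is a refinement of $\calp_0$.

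Finally I would set $\calp:=\calp_0^*$ and verify the second assertion, namely that $q\colon(G,\calp_0^*)\to(H,\calq)$ is again a quasi-isometry of pairs. Since quasi-isometries of pairs compose and we already have $q\colon(G,\calp_0)\to(H,\calq)$, it suffices to show the identity on $G$ is a quasi-isometry of pairs $(G,\calp_0^*)\to(G,\calp_0)$. By the example on refinements and quasi-isometries of pairs, this holds as soon as each $P\in\calp_0$ has finite index in $\Comm_G(P)$. This is the step I expect to be the main obstacle: the finite-index-in-commensurator property is built into $\calq$ through its being qi-characteristic, and it must be transported back to $\calp_0$ across the quasi-isometry of pairs. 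I would establish it by showing that a quasi-isometry of pairs carries each left coset of $P$ to within bounded Hausdorff distance of a left coset of a matching $Q\in\calq$, thereby matching the commensurability relations among peripheral cosets up to finite index; equivalently, one may arrange the construction of \cite{MaSa21} to output $\calp_0$ already having finite index in its commensurators. Granting this, $\calp=\calp_0^*$ satisfies both $\calp\hookrightarrow_h(G,S)$ and the requirement that $q\colon(G,\calp)\to(H,\calq)$ be a quasi-isometry of pairs, which completes the argument.
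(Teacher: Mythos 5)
Your proposal follows essentially the same route as the paper's proof: invoke \cite[Theorem 1.1]{MaSa21} to produce a finite collection $\calp_0$ with $q\colon(G,\calp_0)\to(H,\calq)$ a quasi-isometry of pairs, then feed this into the second part of \Cref{propx:summary} to get $\calp_0^\ast\hookrightarrow_h(G,S)$; your observation that qi-characteristicity already forces every member of $\calq$ to be infinite when $H$ is infinite is a clean shortcut past the paper's ``remove the finite subgroups'' reduction (though the paper also separately disposes of the degenerate case $H\in\calq$). The step you flag as the main obstacle---that $q\colon(G,\calp_0^\ast)\to(H,\calq)$ is still a quasi-isometry of pairs---is not actually open: it is established inside the paper's proof of \Cref{prop:summary}(2), where almost malnormality of $\calq$ (from \Cref{lem:malnormal}) is transported across the quasi-isometry of pairs so that \Cref{lem:QI-Refinement} applies and the identity on $G$ is a quasi-isometry of pairs $(G,\calp_0)\to(G,\calp_0^\ast)$, exactly as you anticipated.
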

\begin{proof}
Without loss of generality, assume that all subgroups in $\calq$ are proper infinite subgroups. Note that removing finite subgroups from $\calq$ preserves being qi-characteristic and  that $\calq\hookrightarrow_h (H,T)$. On the other hand, if $\calq$ contains $H$, then the theorem is trivial by taking $\calp$ the collection that contains only $G$ and $S$ any finite generating set of $G$.  Since $\calq$ is qi-characteristic, the quasi-isometry $q\colon G\to H$ induces a finite collection $\calp$ such that $q\colon (G,\calp)\to (H,\calq)$ is a quasi-isometry of pairs, this is precisely~\cite[Theorem 1.1]{MaSa21}.  Then the second statement of \Cref{propx:summary} and $\calq\hookrightarrow_h (H,T)$ imply that  $\calp^\ast\hookrightarrow_h (G,S)$.
\end{proof}

\subsection*{Uniform Quasi-actions}

The second numbered hypothesis of \Cref{propx:summary} raises the problem: Given a group $H$ with a generating set $T$ and a quasi-isometry $q\colon G\to H$ of finitely generated groups, is there a generating set $S\subset G$ such that $q\colon \Gamma(G,S) \to \Gamma(H,T)$ is a quasi-isometry of Cayley graphs?

We show that a positive answer to this question is equivalent to asking that the quasi-action of $G$ on $H$ induced by $q$ is $T$-uniform in the following sense, see \Cref{lemx:0423-01}. 

\begin{definition}[Uniform induced quasi-action]\label{def:UniformT}
Let $G$ and $H$ be finitely generated groups and let $q\colon G\to H$ be a quasi-isometry with quasi-inverse $\bar q$. Let $T\subset H$ be a generating set (possibly infinite). We say that the quasi-action of $G$ on $H$ induced by $q$ is \emph{uniform with respect to $T$} if there are constants $L\geq 1$, $C\geq0$ such that for each $g\in G$ the function $q_g\colon H\to H$ given by $q_g(h)=  q(g\cdot\bar q(h))$ is an $(L,C)$-quasi-isometry
$q_g\colon \Gamma(H,T)\to \Gamma(H,T)$.
\end{definition}

\begin{example}[Uniform quasi-action and finite extensions]
Let $H$ be a finite index normal subgroup of finitely generated group $G$ and let $T$ be a generating set of $H$ invariant under conjugation by $G$. The $G$-action by conjugation on $H$ preserves the word metric induced by $T$. On the other hand, any transversal $R$ of $H$ in $G$ induces a quasi-isometry  $q\colon G \to H$ given by $q(hg)=h$ for $h\in H$ and $g\in R$. In this case the quasi-action of $G$ on $H$ induced by $q$ is uniform with respect to $T$, see  \Cref{lem:Friday}.
\end{example}

\begin{propx}[\Cref{lem:0423-01}] \label{lemx:0423-01}
Let $G$ and $H$ be groups with finite generating sets $S_0$ and $T_0$, and let $q\colon \Gamma(G,S_0) \to \Gamma(H,T_0)$ be a quasi-isometry. 
Let $T\subset H$ containing $T_0$. The following statements are equivalent:
\begin{enumerate}
    \item The  quasi-action of $G$ on $H$ induced by $q$ is uniform with respect to $T$.
    \item There is $S\subset G$ containing $S_0$ such that $q\colon \Gamma(G,S) \to \Gamma(H,T)$ is a quasi-isometry. 
\end{enumerate}
\end{propx}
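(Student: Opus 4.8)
The plan is to make the pullback pseudo-metric
$\rho(g_1,g_2) := \dist_T(q(g_1),q(g_2))$ on $G$ the central object, and to show that uniformity of the induced quasi-action is precisely the statement that $\rho$ is \emph{coarsely left-invariant}, uniformly in the translating element; this is exactly what allows one to realise $\rho$ (up to quasi-isometry) as a word metric on $G$. Throughout I fix the quasi-inverse $\bar q$, so that $\bar q\circ q$ is uniformly $\dist_{S_0}$-close to $\mathrm{id}_G$ and $q\circ\bar q$ is uniformly $\dist_{T_0}$-close to $\mathrm{id}_H$; since $T\supseteq T_0$ (and, once constructed, $S\supseteq S_0$) we have $\dist_T\le\dist_{T_0}$ and $\dist_S\le\dist_{S_0}$, so these closeness bounds persist for $\dist_T$ and $\dist_S$.

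First I would record the key estimate coming from (1). Unwinding $q_g(q(a))=q(g\cdot\bar q(q(a)))$ and using $\bar q q\approx\mathrm{id}_G$ together with $q$ being an $(L_0,C_0)$-quasi-isometry for the finite generating sets shows that $\dist_T\!\big(q_g(q(a)),\,q(ga)\big)$ is uniformly bounded. Feeding this into the hypothesis that each $q_g$ is an $(L,C)$-quasi-isometry of $\Gamma(H,T)$ yields constants for which $\rho(ga,gb)$ and $\rho(a,b)$ agree up to a fixed multiplicative and additive error, for all $a,b,g\in G$. This coarse, uniform left-invariance of $\rho$ is the linchpin of the forward direction.

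For the implication (1)$\Rightarrow$(2), which I expect to be the main obstacle, I would set $S:=\{g\in G:\rho(1,g)\le R\}$ (symmetrised, with $R$ large) and verify three things. First, $S\supseteq S_0$ once $R\ge L_0+C_0$, because $\dist_T\le\dist_{T_0}$; hence $S$ generates $G$ and $\dist_S\le\dist_{S_0}$. Second, the Lipschitz bound $\dist_T(q(1),q(g))\le(\mathrm{const})\,\dist_S(1,g)$ follows by telescoping along an $S$-word and applying coarse left-invariance to each one-generator step. Third — the delicate bound — I would take a $T$-geodesic $q(1)=h_0,\dots,h_N=q(g)$ in $\Gamma(H,T)$, push it to $g_j:=\bar q(h_j)$, use $q\bar q\approx\mathrm{id}_H$ to see that each $\rho(g_{j-1},g_j)$ is uniformly bounded, and then use coarse left-invariance to conclude $g_{j-1}^{-1}g_j\in S$ for $R$ large; this gives $\dist_S(g_0,g_N)\le N=\dist_T(q(1),q(g))$, while $g_0,g_N$ are $\dist_S$-close to $1,g$ via $\bar q q\approx\mathrm{id}_G$ and $S_0\subseteq S$, yielding $\dist_S(1,g)\le\dist_T(q(1),q(g))+(\mathrm{const})$. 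Using left-invariance of $\dist_S$ and coarse left-invariance of $\rho$ upgrades both estimates from basepoint $1$ to arbitrary pairs, and coarse surjectivity of $q$ for $\dist_{T_0}$ (hence for $\dist_T$) gives the required coarse density; thus $q\colon\Gamma(G,S)\to\Gamma(H,T)$ is a quasi-isometry.

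Finally, for (2)$\Rightarrow$(1) I would first argue that the fixed $\bar q$ is itself a quasi-isometry $\Gamma(H,T)\to\Gamma(G,S)$: any genuine quasi-inverse of $q$ for the metrics $\dist_S,\dist_T$ lies at bounded $\dist_S$-distance from $\bar q$, since $q\bar q$ and $q\bar q'$ are both $\dist_T$-close to $\mathrm{id}_H$ and the lower quasi-isometry bound for $q$ transfers this to $\dist_S$-closeness of $\bar q$ and $\bar q'$. Then for each $g$ I estimate $\dist_T(q_g(h_1),q_g(h_2))=\dist_T\!\big(q(g\bar q(h_1)),q(g\bar q(h_2))\big)$ directly: the quasi-isometry bounds for $q\colon\Gamma(G,S)\to\Gamma(H,T)$, left-invariance of $\dist_S$ (which removes the factor $g$), and the quasi-isometry bounds for $\bar q$ give $\dist_T(q_g(h_1),q_g(h_2))\approx\dist_T(h_1,h_2)$ with constants independent of $g$. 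Coarse density of $q_g(H)$ holds because $g\bar q(H)$ is coarsely dense in $\Gamma(G,S)$ (left translation is an isometry) and $q$ is coarsely surjective, with uniform constants. Hence each $q_g$ is an $(L,C)$-quasi-isometry of $\Gamma(H,T)$ for fixed $L,C$, i.e.\ the induced quasi-action is uniform with respect to $T$.
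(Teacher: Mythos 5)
Your proof is correct and follows essentially the same route as the paper's: the paper likewise defines $S$ as the set of elements $f^{-1}g$ for which $q(f)$ and $q(g)$ are boundedly far from the endpoints of an edge of $\Gamma(H,T)$ (i.e.\ $\rho(f,g)$ bounded), proves the Lipschitz upper bound by telescoping with the uniform quasi-action, and obtains the lower bound by pulling back a $T$-geodesic through $\bar q$. Your explicit isolation of the coarse left-invariance of the pullback pseudometric $\rho$ is a clean repackaging of the same estimates, and your fleshed-out argument for (2)$\Rightarrow$(1), which the paper dismisses as immediate, is also fine.
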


\begin{corx}  \label{thmx:HypEmbd}
Let $G$ and $H$ be finitely generated groups with finite collections of infinite subgroups $\calp$ and $\calq$ respectively. Suppose $q\colon (G,\calp) \to (H,\calq)$ is a quasi-isometry of pairs inducing a $T$-uniform  quasi-action of $G$ on $H$.
If $\calq\hookrightarrow_h(H,T)$, then $\calp^\ast \hookrightarrow_h G$.
\end{corx}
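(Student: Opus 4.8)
The plan is to obtain the conclusion by chaining \Cref{lemx:0423-01} with \Cref{propx:summary}; the sole purpose of the $T$-uniformity hypothesis is to manufacture the second hypothesis of \Cref{propx:summary}. First I would fix finite base generating sets. Because the quasi-action of $G$ on $H$ induced by $q$ is assumed to be $T$-uniform, the set $T$ is a generating set of $H$, and since $H$ is finitely generated we may choose a finite subset $T_0\subseteq T$ that still generates $H$; fix also any finite generating set $S_0$ of $G$. A quasi-isometry of pairs is in particular a quasi-isometry $G\to H$, so $q$ is a quasi-isometry $q\colon \Gamma(G,S_0)\to\Gamma(H,T_0)$, both word metrics arising from finite generating sets.

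Next I would apply \Cref{lemx:0423-01} to the quasi-isometry $q\colon \Gamma(G,S_0)\to\Gamma(H,T_0)$ and the (possibly infinite) generating set $T\supseteq T_0$. The hypothesis that the induced quasi-action is uniform with respect to $T$ is exactly condition~(1) of that proposition, so condition~(2) supplies a generating set $S\subseteq G$ with $S_0\subseteq S$ — in particular $S$ generates $G$ — for which $q\colon \Gamma(G,S)\to\Gamma(H,T)$ is a quasi-isometry.

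With $S$ and $T$ in hand, both hypotheses of \Cref{propx:summary} are in place: its hypothesis~(1) is the standing assumption that $q\colon(G,\calp)\to(H,\calq)$ is a quasi-isometry of pairs, and its hypothesis~(2) is the quasi-isometry $q\colon\Gamma(G,S)\to\Gamma(H,T)$ produced in the previous step. Since by assumption the members of $\calq$ are infinite and $\calq\hookrightarrow_h(H,T)$, conclusion~(2) of \Cref{propx:summary} gives $\calp^\ast\hookrightarrow_h(G,S)$. Finally, being hyperbolically embedded with respect to the specific generating set $S$ yields $\calp^\ast\hookrightarrow_h G$ directly from the definition (which only requires the existence of such a generating set), completing the argument.

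The step requiring the most care is the bookkeeping around \Cref{lemx:0423-01}: one must check that the notion ``$T$-uniform quasi-action'' as it appears in the hypothesis of the corollary is compatible with the finite base generating sets $S_0,T_0$ chosen here, i.e.\ that $T$-uniformity is insensitive, up to adjusting the constants $L,C$, to the choice of finite generating sets and of the quasi-inverse $\bar q$ defining the quasi-action. Once this compatibility is recorded, the hypothesis genuinely supplies condition~(1) of \Cref{lemx:0423-01}, and the remainder of the proof is a direct appeal to the two cited results.
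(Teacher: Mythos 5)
Your argument is exactly the paper's proof: invoke Proposition~\ref{lemx:0423-01} to extract a generating set $S$ of $G$ with $q\colon\Gamma(G,S)\to\Gamma(H,T)$ a quasi-isometry, then apply the second statement of Theorem~\ref{propx:summary} to conclude $\calp^\ast\hookrightarrow_h(G,S)$. The extra bookkeeping you flag about finite base generating sets is reasonable care but does not change the route; the proposal is correct and matches the paper.
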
  
\begin{proof}
Since the quasi-action of $G$ on $H$ induced by $q$ is $T$-uniform, \Cref{lemx:0423-01} implies that there is a generating set $S$ of $G$ such that $q\colon \Gamma(G, S) \to \Gamma(H,T)$ is a quasi-isometry.
Then the second statement of  \Cref{propx:summary} and $\calq\hookrightarrow_h (H,T)$  imply that $\calp^\ast\hookrightarrow_h (G,S)$.
 \end{proof}


Let us remark that for this last  corollary,  in the case that $T$ is finite, then there is a finite $S\subset G$ such that $\mathcal{P} \hookrightarrow_h (G, S)$; this case is implied by the results on quasi-isometric rigidity of relative hyperbolicity in~\cite{BDM09}. 

\subsection*{Finite Extensions}

The following application is a particular instance of \Cref{cor:Saturday} in the main body of the article. 

\begin{thmx}[\Cref{cor:Saturday}]\label{corx:Saturday}
Let $H$ be a finite index normal subgroup of a finitely generated group $G$, and let $\calq$ be a finite collection of infinite subgroups of $H$ such that $\calq\hookrightarrow_h (H,T)$.  
Suppose:  
\begin{enumerate}
    \item  The set $T$ is invariant under conjugation by $G$.
    \item The collection  $\{hQh^{-1} \colon h\in H \text{ and } Q\in \calq\}$ is invariant under conjugation by $G$. 
\end{enumerate}
If $\calq^*$ is a refinement of $\calq$ in $G$, then $\calq^*\hookrightarrow_h G$.
\end{thmx}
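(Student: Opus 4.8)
The plan is to exhibit $\calq^{*}$ as the refined collection $\calp^{*}$ delivered by \Cref{thmx:HypEmbd}, applied to the standard quasi-isometry from $G$ to $H$ coming from the finite-index normal inclusion $H\trianglelefteq G$. Fix a transversal $R$ of $H$ in $G$ and let $q\colon G\to H$ be the projection $q(hg)=h$ for $h\in H$ and $g\in R$; since $[G:H]<\infty$ this is a quasi-isometry with quasi-inverse the inclusion $H\hookrightarrow G$. I take $\calp=\calq$, regarding each $Q\in\calq$ as an infinite subgroup of $G$. Because $\calp$ and $\calq$ are then the \emph{same} collection of subgroups of $G$, a refinement $\calq^{*}$ of $\calq$ in $G$ is at the same time a refinement $\calp^{*}$ of $\calp$, so it will suffice to verify the two hypotheses of \Cref{thmx:HypEmbd} for the pair map $q\colon(G,\calp)\to(H,\calq)$.

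The first hypothesis is that $q\colon(G,\calq)\to(H,\calq)$ be a quasi-isometry of pairs. Hypothesis (2) states that $\{hQh^{-1}\colon h\in H,\ Q\in\calq\}$ is invariant under conjugation by $G$, which is exactly the condition under which \Cref{prop:FiniteIndexSupergroupIff} shows the inclusion $(H,\calq)\hookrightarrow(G,\calq)$ to be a quasi-isometry of pairs; as $q$ is a quasi-inverse of that inclusion, it is itself a quasi-isometry of pairs. If one prefers a direct check, for $g=h g_{R}$ with $h\in H$ and $g_{R}\in R$ one computes $q(gQ)=h\,(g_{R}Qg_{R}^{-1})$; hypothesis (2) rewrites the conjugate $g_{R}Qg_{R}^{-1}$ as $h_{0}Q'h_{0}^{-1}$ for some $Q'\in\calq$ and $h_{0}\in H$, and since $R$ and $\calq$ are finite only finitely many such $h_{0}$ occur, so that $q(gQ)$ lies within uniformly bounded Hausdorff distance of the coset $(hh_{0})Q'$ of a member of $\calq$ (using that right translation displaces every point by a bounded amount).

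The second hypothesis is that $q$ induce a $T$-uniform quasi-action of $G$ on $H$. This is the content of the finite-extension Example recorded in \Cref{lem:Friday}: hypothesis (1) says $T$ is invariant under conjugation by $G$, so the conjugation action of $G$ on $H$ preserves the word metric on $H$ induced by $T$, and consequently the quasi-action $q_{g}(h)=q(g\cdot\bar q(h))$ attached to the transversal projection $q$ is uniform with respect to $T$.

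Granting these two points, the hypothesis $\calq\hookrightarrow_{h}(H,T)$ lets \Cref{thmx:HypEmbd} conclude $\calp^{*}\hookrightarrow_{h}G$, which is $\calq^{*}\hookrightarrow_{h}G$ since $\calp=\calq$. I do not anticipate a real obstacle: the argument is an assembly of the two finite-extension Examples---one supplying each hypothesis of \Cref{thmx:HypEmbd}---together with the identification $\calp=\calq$. The single point demanding care is the direction of the quasi-isometry, since \Cref{thmx:HypEmbd} requires a map out of $(G,\calp)$; one must therefore use the projection $q$ (equivalently, the quasi-inverse of the inclusion), and it is precisely here that hypothesis (2) enters.
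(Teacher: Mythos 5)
Your argument is, in substance, the paper's own proof: the paper likewise establishes the quasi-isometry of pairs from hypothesis (2) via \Cref{prop:FiniteIndexSupergroupIff}, the uniform quasi-action from hypothesis (1) via \Cref{lem:Friday}, and then feeds these into the second statement of \Cref{propx:summary}, of which \Cref{thmx:HypEmbd} is just the packaged form you invoke. Working with the transversal projection $G\to H$ rather than the inclusion $H\hookrightarrow G$ is an immaterial difference, since the two are quasi-inverse and the relevant statements pass to quasi-inverses; your identification of a refinement of $\calq$ in $G$ with $\calp^{*}$ for $\calp=\calq$ is also correct.

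The one genuine omission is a preliminary reduction the paper performs: $\calq\hookrightarrow_h(H,T)$ only guarantees that $T\cup\bigcup_{Q\in\calq}Q$ generates $H$, not that $T$ alone does, whereas \Cref{lem:Friday}, \Cref{lemx:0423-01}, and hence \Cref{thmx:HypEmbd} all require $T$ to contain a finite generating set of $H$ (otherwise $\Gamma(H,T)$ may be disconnected and ``uniform with respect to $T$'' is not even defined). The paper fixes this at the outset by enlarging $T$ by finitely many elements, which preserves $\calq\hookrightarrow_h(H,T)$ by \cite[Corollary~4.27]{DGOsin2017} and the quasi-isometry type of $\Gamma(H,T)$. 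After this enlargement $T$ need no longer be literally conjugation-invariant, so your claim that conjugation acts by isometries of $\Gamma(H,T)$ must be weakened to ``by quasi-isometries with uniform constants''; this still suffices for \Cref{lem:Friday}, but the adjustment should be made explicit.
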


\begin{example}
  Let
$G= \langle a,b,t \colon  tat^{-1}=b,\ t^2=1  \rangle\cong F_2\rtimes\ZZ_2$, let $H=\langle a,b\rangle$, and let $\calq=\{ \langle a\rangle, \langle b\rangle\}$. Note that $\calq\hookrightarrow_h H$, and, for instance one can take $\calq^\ast = \{\langle a \rangle\}$ and observe that $\calq^* \hookrightarrow_h G$. In contrast,  for  $\calq_0=\{ \langle a \rangle\} \hookrightarrow_h H$ the theorem does not apply since the conjugates of $\langle a\rangle$ in $H$ are not invariant under conjugation by elements of $G$.
\end{example}

The next result illustrates concrete examples were \Cref{corx:Saturday} applies.

\begin{thmx}[\Cref{thm.hypemb.semidirect}]\label{thmx.hypemb.semidirect}
Let $A$ be a {finitely generated} group with a (not necessarily finite) generating set $T$, and let $\mathcal{H}$ be a finite collection of  infinite subgroups such that $\mathcal{H}\hookrightarrow_h (A,T)$.
If $F\leq \mathsf{Aut}(A)$ is   finite,  $T$ and $\mathcal{H}$ are $F$-invariant, and the $F$-action on $\mathcal{H}$ is free,  then $\calh_F\hookrightarrow_h(A\rtimes F,T\cup F)$ where $\calh_F$ is collection of representatives of $F$-orbits in $\calh$.
\end{thmx}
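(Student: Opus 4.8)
The plan is to deduce this theorem as an application of \Cref{corx:Saturday} (equivalently \Cref{cor:Saturday}), after setting up the right finite-extension data. I want to identify the semidirect product $A\rtimes F$ as the ambient group $G$, with $A$ playing the role of the finite index normal subgroup $H$. Since $F$ is finite, $A$ is indeed finite index and normal in $A\rtimes F$, so the structural hypothesis of \Cref{corx:Saturday} is met with $G=A\rtimes F$, $H=A$, and $\calq=\mathcal{H}$.

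First I would verify the two numbered hypotheses of \Cref{corx:Saturday}. Hypothesis (1) asks that the generating set $T$ of $A$ be invariant under conjugation by $G=A\rtimes F$. Conjugation by $A$ is inner on $A$, and the hypothesis that $T$ is $F$-invariant handles conjugation by the $F$-factor; combining these, since every element of $A\rtimes F$ is a product $af$ with $a\in A$ and $f\in F$, I would check that $T$ is invariant under conjugation by all of $G$ (here one must be slightly careful about whether $T$ being $F$-invariant literally means invariant under the automorphisms in $F$ versus under conjugation by the $F$-subgroup inside the semidirect product, but these coincide by construction of $A\rtimes F$). Hypothesis (2) asks that $\{aHa^{-1} : a\in A,\ H\in\mathcal{H}\}$ be invariant under conjugation by $G$; again conjugation by $A$ preserves this set by definition, and conjugation by $F$ permutes $\mathcal{H}$ (hence the conjugates) because $\mathcal{H}$ is $F$-invariant, so the full set of $A$-conjugates is $G$-invariant.

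With both hypotheses verified, and with the hyperbolic embedding hypothesis $\mathcal{H}\hookrightarrow_h(A,T)$ given, \Cref{corx:Saturday} immediately yields $\mathcal{H}^*\hookrightarrow_h G=A\rtimes F$ for any refinement $\mathcal{H}^*$ of $\mathcal{H}$ in $G$. The remaining task is to identify such a refinement concretely as $\calh_F$, the set of representatives of $F$-orbits in $\mathcal{H}$. This is exactly the content of \Cref{ex:refinement}: under the stated hypotheses (an almost malnormal collection of infinite subgroups, $F$ acting freely on $\mathcal{H}$), a refinement of $\mathcal{H}$ in $A\rtimes F$ is $\calh_F$. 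I would therefore invoke \Cref{ex:refinement} to equate $\mathcal{H}^*$ with $\calh_F$, noting that almost malnormality of $\mathcal{H}$ is available because a hyperbolically embedded collection of subgroups is almost malnormal (a standard consequence of the definition, which I would cite from the characterisation in \Cref{prop:HypEmbHatGamma}). Finally I would record the generating set: the refinement argument and \Cref{corx:Saturday} produce the embedding relative to $G$, and unwinding the proof of \Cref{corx:Saturday} gives the explicit generating set $T\cup F$ for $A\rtimes F$, which matches the statement.

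The main obstacle I anticipate is bookkeeping around \emph{almost malnormality} and the precise form of the refinement. Specifically, \Cref{ex:refinement} presupposes almost malnormality of $\mathcal{H}$, so I must first justify that $\mathcal{H}\hookrightarrow_h(A,T)$ forces $\mathcal{H}$ to be almost malnormal, and then carefully check that the freeness of the $F$-action guarantees the commensurators do not collapse distinct $F$-orbits—i.e.\ that $\Comm_{A\rtimes F}(H)$ for $H\in\mathcal{H}$ does not accidentally enlarge $\mathcal{H}$ beyond permuting its members, so that the $F$-orbit representatives genuinely form a refinement. Ensuring the commensurator computation in $A\rtimes F$ behaves as claimed (rather than in $A$ alone) is the delicate point, and it is where the freeness of the $F$-action on $\mathcal{H}$ is essential.
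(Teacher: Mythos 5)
There is a genuine gap in your verification of hypothesis (1) of \Cref{corx:Saturday}. That hypothesis requires the generating set $T$ of $A$ to be invariant under conjugation by \emph{all} of $G=A\rtimes F$, and in particular under conjugation by every element of $A$ itself. The theorem you are proving only assumes that $T$ is invariant under the automorphisms in $F$; it does not assume invariance under $\Inn(A)$, and your remark that ``conjugation by $A$ is inner on $A$'' does not supply this --- inner automorphisms do not preserve arbitrary generating sets. The motivating example following the theorem already violates your claim: for $A=\bigast_{i=1}^n B_i$ with $T=\bigcup_i B_i\setminus\{1\}$, conjugating an element of $B_1$ by an element of $B_2$ produces an element outside $T$. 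Nor can you fall back on the weaker hypothesis of the full version \Cref{cor:Saturday} (that the $G$-action by conjugation is a uniform quasi-action on $\Gamma(A,T)$): the map $x\mapsto axa^{-1}$ is in general only a $(1,2|a|_T)$-quasi-isometry of $\Gamma(A,T)$, with additive constant growing with $a$ --- already for a free group $\langle s,t\rangle$ with its standard generating set one has $|s^nts^{-n}|=2n+1$ while $|t|=1$. So the reduction to \Cref{corx:Saturday} does not go through as written.

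This is exactly why the paper does not derive \Cref{thmx.hypemb.semidirect} from \Cref{corx:Saturday}, but instead proves \Cref{prop.semidirect.qi02} directly: the inclusion $A\hookrightarrow A\rtimes F$ is a quasi-isometry $\Gamma(A,T)\to\Gamma(A\rtimes F,T\cup F)$ and a quasi-isometry of pairs $(A,\calh)\to(A\rtimes F,\calh_F)$. The key computation there shows that pushing a generator past the $F$-factor only ever applies an automorphism from $F$, so $F$-invariance of $T$ suffices where $G$-conjugation-invariance would fail. With those two facts, one applies \Cref{prop:summary} directly. The remainder of your outline is consistent with the paper's proof: hypothesis (2) on the conjugates of $\calh$, the identification of the refinement with $\calh_F$ via \Cref{ex:refinement} and the freeness of the $F$-action, and the use of almost malnormality are all correct (though the citation for almost malnormality should be \Cref{lem:malnormal} rather than \Cref{prop:HypEmbHatGamma}). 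To repair the argument, replace the appeal to \Cref{corx:Saturday} by a direct proof that $\Gamma(A,T)\hookrightarrow\Gamma(A\rtimes F,T\cup F)$ is a quasi-isometry; that is the content of \Cref{prop.semidirect.qi02}.
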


\begin{example}
Let $A=\bigast_{i=1}^n B_i$ with each $B_i$ isomorphic to a fixed finitely generated group $B$.  Let $F=\ZZ_n$ act on $A$ by cyclically permuting the copies of $B$.  Consider the generating set of $A$ given by $T=\bigcup_{i=1}^nB_i\backslash\{1\}$, then $T$ is $F$-invariant.  Now, the collection $\calh=\{B_1,\dots,B_n\}$ is hyperbolically embedded into $(A,T)$ and $F$ acts freely by conjugation on $\calh$.  All of the hypotheses of the previous theorem have been verified so we conclude that $B_1\hookrightarrow_h(A\rtimes F,T\cup F)$.
\end{example}



\subsection*{Organization.}   The rest of the article is divided into five sections.   \Cref{sec:quasiActions} is on quasi-actions, it contains the proof of \Cref{lemx:0423-01} as well as some corollaries. The proof of \Cref{propx:summary} is the content of \Cref{sec:Summary}. Then Sections~\ref{sec:ThmE} and~\ref{sec:ThmF} contain the proofs of \Cref{corx:Saturday} and \Cref{thmx.hypemb.semidirect} respectively. Finally, \Cref{sec conclusion} contains some questions and discussion about related to the results in this article and the definition of a quasi-isometry of pairs.

\subsection*{Acknowledgements}
The first author would like to thank his PhD supervisor Professor Ian Leary.  The first author was supported by the Engineering and Physical Sciences Research Council grant number 2127970.  This work has received funding from the European Research Council (ERC) under the European Union’s Horizon 2020 research and innovation programme (Grant agreement No. 850930). The second author acknowledges funding by the Natural Sciences and Engineering Research Council of Canada, NSERC.  Both authors would like to thank Luis Jorge S\'anchez Salda\~na and the anonymous referee for their helpful comments.

\section{Uniform quasi-actions}

\label{sec:quasiActions}

\begin{definition}[Uniform quasi-action]
Let $G$ be a group and let $X$ be a metric space. Let $\mathsf{QI(X)}$ denote the set of quasi-isometries $X\to X$. 
A function $G\to \mathsf{QI(X)}$, $g\mapsto f_g$, is a \emph{quasi-action} if there is $K\geq 0$  such that for any $g_1,g_2\in G$  
\begin{enumerate}
    \item the map $f_{g_1g_2}$ is at distance at most $K$ from the map $f_{g_1}\circ f_{g_2}$ in the $L_\infty$-distance, and
    \item the map $f_{g_1}\circ f_{g_1^{-1}}$ is at distance at most $K$ from the identity.
\end{enumerate}
The quasi-action $G\to \mathsf{QI}(X)$ is \emph{uniform} if there are constants $L\geq 1$ and $C\geq0$ such that for any $g\in G$ the map $f_g$ is an $(L,C)$-quasi-isometry. 
\end{definition}

It is well known that a quasi-isometry $q\colon G\to H$ of finitely generated groups induces a uniform quasi-action of $G$ on $H$:

\begin{definition}[Uniform quasi-action induced by a quasi-isometry]\label{lem:quasiAction}
Let $G$ be a group with a word metric  induced by a finite generating set, let $X$ be a metric space,
let $q\colon G \to X $ and $\bar q\colon X \to G$ be $(L_0,C_0)$-quasi-isometries such that $q\circ \bar q$ and $\bar q\circ  q$ are at distance less than $C_0$ from the identity maps on $X$ and $G$ respectively. 
For $g\in G$, let 
\[ L_g\colon G\to G, \qquad  x\mapsto gx;\] 
and let 
\[ q_g\colon X\to X\qquad q_g= q \circ \mathbf{g}\circ\bar q.\] 
It is an exercise to verify that there are constants $L\geq 1$ and $C\geq0$ such that: 
\begin{itemize}
    \item For $g\in G$, $q_g\colon X\to X$ is an $(L,C)$-quasi-isometry. 
    \item ($G$ quasi-acts on $X$)  For $g_1,g_2\in G$, the map $q_{g_1g_2}$ is at distance at most $C$ from the map $q_{g_1}\circ q_{g_2}$; and the map $q_{g_1}\circ q_{g_1^{-1}}$ is at distance at most $C$ from the identity.
    \item ($G$ acts $C_0$-transitively on $X$) For every $x ,y\in X$ there is $g\in G$ such that $\dist_G(x, q_g(y))\leq  C$.
\end{itemize}
The map $G\to \mathsf{QI}(X)$ given by $g\mapsto q_g$ is called the \emph{uniform quasi-action of $G$ on $X$ induced by $q$ and $\bar q$}. 
\end{definition}

\begin{remark}[Equivalence of  Definitions~\ref{lem:quasiAction} and~\ref{def:UniformT} ]\label{rem:Uniform}
In the context of \Cref{def:UniformT}, if the induced quasi-action of $G$ on $H$ is uniform with respect to $T$, then $G\to \mathsf{QI}(\Gamma(H,T))$ given by $g\mapsto q_q$ is a uniform quasi-action in the sense of \Cref{lem:quasiAction}. Indeed, since $T$ contains a finite generating set of $H$ there is $M>0$ such that $\dist_{(H,T)}\leq  M\dist_{(H,T_0)}$. Hence if two functions $H\to H$ are at finite $L_\infty$-distance with respect to $\dist_{(H,T_0)}$, then the same holds for $\dist_{(H,T)}$. 
\end{remark}

\begin{definition}[Cayley Graph]\label{def:CayleyGraph}
Let $G$ be a group with a  generating set $S$. The \emph{Cayley graph $\Gamma(G,S)$ of $G$ with respect to $S$} is the $G$-graph with vertex set $G$ and edge set $\{ \{g, gs\}\colon g\in G,\ s\in S  \}$. 
\end{definition}

\begin{proposition}[\Cref{lemx:0423-01}] \label{lem:0423-01}
Let $G$ and $H$ be groups with finite generating sets $S_0$ and $T_0$, and let $q\colon \Gamma(G,S_0) \to \Gamma(H,T_0)$ be a quasi-isometry. 
Let $T\subset H$ containing $T_0$. The following statements are equivalent:
\begin{enumerate}
    \item The  quasi-action of $G$ on $H$ induced by $q$ is uniform with respect to $T$.
    \item There is $S\subset G$ containing $S_0$ such that $q\colon \Gamma(G,S) \to \Gamma(H,T)$ is a quasi-isometry. 
\end{enumerate}
\end{proposition}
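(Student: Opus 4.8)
The plan is to prove the two implications separately, using as a bridge the coarse identity $q(g)\approx q_g(y_0)$ with $y_0 := q(1_G)$, which holds because $\bar q(y_0)=\bar q(q(1_G))$ is at bounded distance from $1_G$. The implication $(2)\Rightarrow(1)$ is essentially formal, whereas $(1)\Rightarrow(2)$ requires producing an explicit generating set $S$ and is where the uniformity hypothesis does the real work.

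\textbf{$(2)\Rightarrow(1)$.} Suppose $S\supseteq S_0$ makes $q\colon\Gamma(G,S)\to\Gamma(H,T)$ an $(L,C)$-quasi-isometry. Since each left translation $L_g\colon\Gamma(G,S)\to\Gamma(G,S)$ is an isometry (the word metric is left-invariant for \emph{any} generating set) and $q_g=q\circ L_g\circ\bar q$, it suffices to check that $\bar q$ is a quasi-isometry $\Gamma(H,T)\to\Gamma(G,S)$ with constants independent of $g$; then $q_g$ is a composition of three quasi-isometries whose constants do not depend on $g$, i.e.\ a uniform quasi-action. The one subtlety is that $\bar q$ is only given as a quasi-inverse with respect to $S_0,T_0$. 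To handle the change of metrics I would fix a genuine quasi-inverse $\bar q'$ of $q\colon\Gamma(G,S)\to\Gamma(H,T)$ and show $\dist_{(G,S)}(\bar q(h),\bar q'(h))$ is uniformly bounded: indeed $q\bar q$ and $q\bar q'$ are both uniformly close to $\mathrm{id}_H$ in the $T$-metric, and applying the quasi-isometry $\bar q'$ together with $\bar q' q\approx\mathrm{id}_G$ transports this to bounded $S$-distance. Since $\bar q'$ is a quasi-isometry $\Gamma(H,T)\to\Gamma(G,S)$ and $\bar q$ stays boundedly close to it, $\bar q$ is one too.

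\textbf{$(1)\Rightarrow(2)$.} Assume each $q_g$ is an $(L,C)$-quasi-isometry of $\Gamma(H,T)$, set $K:=L+C$, and put
\[ S \;=\; S_0 \,\cup\, \{\, g\in G \colon \dist_{(H,T)}(y_0, q(g)) \le K \,\}, \]
which generates $G$ because $S\supseteq S_0$. For the upper (Lipschitz) bound I would verify that a single $S$-step moves $q$ a bounded $T$-distance: $S_0$-steps are controlled by $q$ being a quasi-isometry for $T_0\subseteq T$, while for a step $s$ in the second set one writes $q(g_1)\approx q_{g_1}(y_0)$ and $q(g_1 s)\approx q_{g_1}(q(s))$ and applies the $(L,C)$-bound for $q_{g_1}$ together with $\dist_{(H,T)}(y_0,q(s))\le K$. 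For the lower bound — the heart of the matter — take a $T$-geodesic $y_0=h_0,\dots,h_n=q(g)$ and set $x_i:=\bar q(h_i)$, so $x_0\approx 1_G$ and $x_n\approx g$ in $\dist_{(G,S)}$. Using the \emph{exact} identities $q_{x_i^{-1}}(h_i)=q(x_i^{-1}x_i)=y_0$ and $q_{x_i^{-1}}(h_{i+1})=q(x_i^{-1}x_{i+1})$, uniformity of $q_{x_i^{-1}}$ gives
\[ \dist_{(H,T)}\big(y_0,\, q(x_i^{-1}x_{i+1})\big) = \dist_{(H,T)}\big(q_{x_i^{-1}}(h_i),\, q_{x_i^{-1}}(h_{i+1})\big) \le L + C = K, \]
so $x_i^{-1}x_{i+1}\in S$, whence $\dist_{(G,S)}(x_i,x_{i+1})\le 1$ and $\dist_{(G,S)}(1_G,g)\le n+O(1)$. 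Coarse surjectivity is inherited from the $T_0$-statement since $T\supseteq T_0$ only shrinks distances.

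I expect the main obstacle to be the lower bound in $(1)\Rightarrow(2)$: it is precisely here that the uniformity of the quasi-action is indispensable, since the threshold $K=L+C$ is what forces every pulled-back geodesic step to land in $S$, and it requires care to keep all the coarse identities $q(g)\approx q_g(y_0)$ consistent across the two word metrics $\dist_{(G,S)}$ and $\dist_{(H,T)}$. The analogous bookkeeping of constants under the change from $(S_0,T_0)$ to $(S,T)$ is the only genuinely delicate point in $(2)\Rightarrow(1)$.
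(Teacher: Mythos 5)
Your proof is correct, and although it shares the overall architecture of the paper's argument for $(1)\Rightarrow(2)$ --- build an explicit generating set $S\supseteq S_0$, then establish an upper and a lower Lipschitz bound for $q\colon \Gamma(G,S)\to\Gamma(H,T)$ --- your choice of $S$ is genuinely different, and this redistributes where the uniformity hypothesis is spent. The paper takes $S$ to be the set of products $f^{-1}g$ for which $q(f)$ and $q(g)$ lie within $K_0$ (in the $T_0$-metric) of the two endpoints of a single $T$-edge; with that definition the lower bound is automatic, since the $\bar q$-image of a $T$-geodesic is an $S$-path by construction, and all of the uniformity is consumed in the upper bound via the quasi-action relations $q_{f^{-1}g}\approx q_f^{-1}\circ q_g$ (whence the $3C_1$ error terms). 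Your $S$ is simply the $q$-preimage of the $K$-ball about $y_0$ in $\dist_{(H,T)}$, which is cleaner to state; the price is that the lower bound now also needs uniformity, which you supply through the exact identities $q_{x_i^{-1}}(h_i)=y_0$ and $q_{x_i^{-1}}(h_{i+1})=q(x_i^{-1}x_{i+1})$ for $x_i=\bar q(h_i)$ --- a nice device that makes the threshold $K=L+C$ transparent and avoids the paper's error bookkeeping. Your treatment of $(2)\Rightarrow(1)$, comparing $\bar q$ with a genuine quasi-inverse for the $(S,T)$-metrics, is more detailed than the paper's (which dismisses the implication as immediate) and is the right way to make it rigorous. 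One cosmetic remark: you state the lower bound only for $\dist_{(G,S)}(1_G,g)$; rather than transporting it by left-invariance (which needs an extra appeal to uniformity to compare $\dist_{(H,T)}(y_0,q(a^{-1}b))$ with $\dist_{(H,T)}(q(a),q(b))$), just run the identical geodesic argument from $q(a)$ to $q(b)$ with $x_0=\bar q(q(a))$ and $x_n=\bar q(q(b))$, which are within $C_0$ of $a$ and $b$ since $S_0\subseteq S$.
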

\begin{proof} 
That the second statement implies the first one is immediate. Conversely, suppose that $q$ and $\bar q$ are $(L_0,C_0)$-quasi-isometries $\Gamma(G,S_0)\to \Gamma(H,T_0)$ and $\Gamma(H,T_0)\to \Gamma(G,S_0)$ respectively. Without loss of generality assume that $q(e)=e$ and $\bar q(e)=e$ where $e$ denotes the identity in each corresponding group. 

Let $K_0=L_0+C_0+1$ and define 
\[
\begin{split}
S=  \{ f^{-1}g \in G \colon & \text{ there are }  h\in H   \text{ and }  t\in T \text{ such that } \\  & \dist_{(H, T_0)}(q(f),h)\leq K_0  \text{ and }  \dist_{(H,T_0)}(q(g),ht)\leq K_0  \} .
\end{split}\]
Note that $S_0\subset S$ since $q\colon \Gamma(G,S_0)\to \Gamma(H,T_0)$ is an $(L_0,C_0)$-quasi-isometry. In particular, $S$ is a generating set of $G$. 

Let $L_1\geq 1$ and $C_1\geq 0$ be such that the $G$-action on $H$ induced by $q$ is $(L_1,C_1)$-uniform with respect to $T$. In particular, for every $g\in G$ the function $q_g\colon H\to H$ is an $(L_1,C_1)$-quasi-isometry $\Gamma(H,T) \to \Gamma(H,T)$.

Now, we prove that if the induced quasi-action of $G$ on $H$ is uniform with respect to $T$, then $q\colon \Gamma(G,S) \to \Gamma(H,T)$ is a quasi-isometry. Observe that every vertex of $\Gamma(H,T)$ is at distance at most $C_0$ from $q(G)$ with respect to $\dist_{(H,T_0)}$ and hence with respect to $\dist_{(H,T)}$. Below we prove inequalities~\eqref{eq:0422-01} and~\eqref{eq:0422-02} which will conclude proof.  

\textbf{Claim:} \emph{There is constant $\bar L$ such that 
\begin{equation}\label{eq:0422-01}\begin{split}
\dist_{(H,T)}(q(a),q(b)) 
\leq \bar L\dist_{(G,S)}(a,b).
\end{split}
\end{equation}
for any $a,b\in G$.}

\textbf{Proof of claim:} 
Let $s\in S$. Then there are $f,g\in G$, $h\in H$ and $t\in T$ such that $s=f^{-1}g$ and
\[ \dist_{(H,T_0)}(q(f),h)\leq K_0,\quad \dist_{(H,T_0)}(q(g),ht)\leq K_0.\]
It follows that
\[\dist_{(H,T)}(q_f(e),q_g(e))= \dist_{(H,T)}(q(f),q(g))\leq 2K_0+1.\]
Since the quasi-action of $G$ on $\Gamma(H,T)$ is $(L_1,C_1)$-uniform, the previous inequality implies that
\[
\begin{split}
\dist_{(H,T)}(e,q(s))  &= \dist_{(H,T)}(q_e(e),q_{f^{-1}g}(e)) \\    
& \leq L_1\dist_{(H,T)}(q_f\circ q_e(e),q_f\circ q_{f^{-1}g}(e))+ C_1\\
& \leq L_1\dist_{(H,T)}(q_f(e), q_{g}(e))+  3C_1\\
& \leq  L_1(2K_0+1)+3C_1=:\bar L_0.
\end{split}
\]
For any $g\in G$ and $s\in S$, we have that
\begin{equation*}
\begin{split}
  \dist_{(H,T)}(q(g),q(gs)) & = \dist_{(H,T)}(q_{g}(e), q_{gs}(e)) \\
  &\leq L_1 \dist_{(H,T)}(q_{g^{-1}}\circ q_{g}(e), q_{g^{-1}}\circ q_{gs}(e)) + C_1 \\
  &\leq L_1\dist_{(H,T)}(e, q_{g^{-1}gs}(e)) +3C_1\\
  & \leq L_1\dist_{(H,T)}(q(e),q(s)) + 3C_1.
\end{split}
\end{equation*}
and hence
\[ \dist_{(H,T)}(q(g),q(gs)) \leq  \dist_{(H,T_0)}(q(g),q(gs)) \leq \bar L \]
where $\bar L= L_1(\bar L_0)+3C_1$.  If $a,b\in G$ and $[u_0,\ldots, u_\ell]$ is a geodesic in $\Gamma(G,S)$ from $a$ to $b$, then the triangle inequality implies inequality~\eqref{eq:0422-01}. $\blackdiamond$

\textbf{Claim:}  \emph{For any $a,b\in G$ we have}
\begin{equation}\label{eq:0422-02}
\dist_{(G,S)}(a,b)\leq \dist_{(H,T)}(q(a),q(b)).    
\end{equation}

\textbf{Proof of claim:} Suppose that $[h_0,\ldots ,h_\ell]$ is a geodesic in $\Gamma(H,T)$ from $q(a)$ to $q(b)$. Since $q\colon \Gamma(G,S_0)\to \Gamma (H,T_0)$ is a $(L_0,C_0)$-quasi-isometry, for each $i$, there is $g_i\in G$ such that $\dist_{(H,T_0)}(q(g_i),h_i)\leq C_0$. Let $g_0=a$ and $g_\ell=b$. Observe that $g_i^{-1}g_{i+1}\in S$ for $0\leq i<\ell$, and hence $\dist_{(G,S)}(g_i,g_{i+1})\leq 1$. Now, $[g_0, \ldots , g_\ell]$ is a path in $\Gamma(G,S)$ from $a$ to $b$ and therefore $\dist_{(G,S)}(a,b)\leq \dist_{(H,T)}(q(a),q(b))$ proving  inequality~\eqref{eq:0422-02}. $\blackdiamond$
\end{proof}

\begin{corollary}\label{cor:SaturdayMorning01}
Let $G$ and $H$ be groups with finite generating sets $S_0$ and $T_0$.
Let $q\colon G\to H$ be a group homomorphism which is also an $(L_0,C_0)$-quasi-isometry $q\colon \Gamma(G,S_0) \to \Gamma(H,T_0)$.
If  $T\subset H$ contains $T_0$, then  there is $S\subset G$ containing $S_0$ such that $q\colon \Gamma(G,S) \to \Gamma(H,T)$ is a quasi-isometry.
\end{corollary}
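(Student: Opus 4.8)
The plan is to deduce this directly from \Cref{lem:0423-01}: the conclusion asserts exactly statement~(2) of that proposition for the given $T$, so it suffices to verify statement~(1), namely that the quasi-action of $G$ on $H$ induced by $q$ is uniform with respect to $T$. The entire content of the corollary is therefore to exploit the hypothesis that $q$ is a \emph{homomorphism} (and not merely a quasi-isometry) in order to produce quasi-isometry constants for the maps $q_g$ that are independent of $g$.

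First I would fix a quasi-inverse $\bar q\colon \Gamma(H,T_0)\to\Gamma(G,S_0)$ of $q$, normalised so that $q(e)=e$ (automatic, as $q$ is a homomorphism) and $\bar q(e)=e$, and let $D$ be a constant with $\dist_{(H,T_0)}(q(\bar q(h)),h)\le D$ for all $h\in H$; such a $D$ exists because $q\circ\bar q$ lies at bounded $L_\infty$-distance from the identity. Since $T_0\subseteq T$ we have $\dist_{(H,T)}\le\dist_{(H,T_0)}$, so the same bound $\dist_{(H,T)}(q(\bar q(h)),h)\le D$ holds in $\Gamma(H,T)$.

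The key step is the algebraic simplification of $q_g$. For $g\in G$ and $h\in H$, using that $q$ is a homomorphism,
\[
q_g(h)=q\bigl(g\cdot\bar q(h)\bigr)=q(g)\cdot q(\bar q(h)),
\]
so that $q_g=L_{q(g)}\circ(q\circ\bar q)$, where $L_{q(g)}$ denotes left multiplication by $q(g)$. Now left multiplication is an isometry of $\Gamma(H,T)$ for every generating set $T$, because the word metric is left-invariant, and $q\circ\bar q$ is a $(1,2D)$-quasi-isometry of $\Gamma(H,T)$ as it lies within $D$ of the identity. Composing, each $q_g$ is a $(1,2D)$-quasi-isometry $\Gamma(H,T)\to\Gamma(H,T)$ with constants independent of $g$. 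This is precisely the statement that the induced quasi-action is uniform with respect to $T$, so \Cref{lem:0423-01} yields the desired generating set $S\supseteq S_0$.

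I do not anticipate a genuine obstacle here: the corollary is a short application of \Cref{lem:0423-01}, and the only real point is to notice the factorisation $q_g=L_{q(g)}\circ(q\circ\bar q)$, which is available precisely because $q$ respects the group operation. The mild care needed is in tracking the generating sets so that the perturbation bound $D$ transfers from $\Gamma(H,T_0)$ to $\Gamma(H,T)$, which is immediate from $T_0\subseteq T$.
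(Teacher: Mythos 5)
Your proof is correct and follows essentially the same route as the paper's: both exploit the factorisation $q_g=L_{q(g)}\circ(q\circ\bar q)$ available because $q$ is a homomorphism, observe that this makes each $q_g$ a quasi-isometry with constants independent of $g$, and then invoke \Cref{lem:0423-01}. Your explicit remark that the bound transfers from $\dist_{(H,T_0)}$ to $\dist_{(H,T)}$ because $T_0\subseteq T$ is a small point the paper leaves implicit, but it does not change the argument.
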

\begin{proof}
Let $\bar q\colon H\to G$ be a quasi-inverse of $q$ and, by increasing $L_0$ and $C_0$ if necessary, assume that $\bar q\colon \Gamma(H,T_0) \to \Gamma(G, S_0)$ is a $(L_0,C_0)$-quasi-isometry.  Moreover, suppose $q\circ \bar q$ and $\bar q\circ q$ are at distance at most $C_0$ from the corresponding identity maps with respect to $\dist_{(H,T_0)}$ and $\dist_{(G,S_0)}$. Note that for any $g\in G$,
\[q_g (h) = q(g \cdot \bar q (h))=q(g) \cdot q (\bar q (h)). \]
Hence $q_g$ is an $(1,C_0)$-quasi-isometry since it is the composition of $q \circ \bar q$ followed by the isometry given by multiplication on the left by $q(g)$.  Then the proof concludes by invoking  \Cref{lem:0423-01}.
\end{proof}

The following result is  the particular case of  \Cref{cor:SaturdayMorning01} in which $H$ is a finite index subgroup of $G$. In this case, one can give a more algebraic description of the generating set $S$.  The proof follows the same lines as the previous argument modulo \Cref{lem:Friday}. 

\begin{proposition}\label{prop:FridayNight}
Let $H$ be a finite index normal subgroup of a finitely generated group $G$. Let $T$ be a generating set of $H$, let $R$ be a right transversal of $H$ in $G$, and let $S=T\cup R$. If the $G$-action by conjugation on $H$ is a uniform quasi-action on  $\Gamma(H,T)$, then the inclusion  $\Gamma(H,T) \hookrightarrow  \Gamma(G,S)$ is a quasi-isometry.
\end{proposition}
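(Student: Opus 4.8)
The plan is to verify directly that the inclusion $\iota\colon H\hookrightarrow G$ is a quasi-isometry $\Gamma(H,T)\to\Gamma(G,S)$ by establishing two metric inequalities together with coarse density, running the same scheme as the proof of \Cref{lem:0423-01} but now with the concrete generating set $S=T\cup R$ in place of the abstractly constructed one. First I would record the easy ingredients. Since $G$ is finitely generated and $H$ has finite index, $H$ is finitely generated and $R$ is finite; fixing a finite $T_0\subseteq T$ generating $H$, the set $S_0=T_0\cup R$ is a finite generating set of $G$, and the retraction $q\colon G\to H$ with $q(hr)=h$ for $h\in H$, $r\in R$ is a quasi-isometry $\Gamma(G,S_0)\to\Gamma(H,T_0)$ having quasi-inverse $\iota$. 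Coarse density of $\iota$ is immediate: any $g\in G$ is written $g=hr$, so $\dist_{(G,S)}(g,h)=\dist_{(G,S)}(e,r)\le 1$ because $r\in R\subseteq S$. The upper bound $\dist_{(G,S)}(a,b)\le\dist_{(H,T)}(a,b)$ for $a,b\in H$ is also immediate, since $T\subseteq S$ turns any $T$-path in $H$ into an $S$-path in $G$. By \Cref{lem:Friday}, the hypothesis that the conjugation action is a uniform quasi-action on $\Gamma(H,T)$ guarantees that the quasi-action of $G$ on $H$ induced by $q$ is uniform with respect to $T$; I would fix constants $L\ge1$ and $C\ge0$ witnessing that conjugation by any element of $G$ is an $(L,C)$-quasi-isometry of $\Gamma(H,T)$.

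The substantive step is the lower bound $\dist_{(H,T)}(a,b)\le\kappa\,\dist_{(G,S)}(a,b)$ for $a,b\in H$ and a uniform constant $\kappa$. I would take an $S$-geodesic $a=g_0,g_1,\dots,g_\ell=b$ in $\Gamma(G,S)$ with $\ell=\dist_{(G,S)}(a,b)$, apply the retraction to obtain $q(g_0)=a,\dots,q(g_\ell)=b$ in $H$, and bound each $\dist_{(H,T)}(q(g_i),q(g_{i+1}))$ by a constant independent of $i$, after which the triangle inequality finishes the proof. Writing $g_i=h_ir_i$ with $h_i\in H$, $r_i\in R$, and $g_{i+1}=g_is_i^{\epsilon}$ with $s_i\in S$, $\epsilon=\pm1$, there are two cases. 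If $s_i\in T$, then $g_{i+1}=h_i(r_is_i^{\epsilon}r_i^{-1})r_i$ with $r_is_i^{\epsilon}r_i^{-1}\in H$ by normality, so $q(g_i)^{-1}q(g_{i+1})=r_is_i^{\epsilon}r_i^{-1}$; since conjugation by $r_i$ fixes $e$ and is an $(L,C)$-quasi-isometry of $\Gamma(H,T)$ while $s_i^{\epsilon}$ has $T$-length $1$, this element has $T$-length at most $L+C$. If $s_i\in R$, I would decompose $r_is_i^{\epsilon}=h'r'$ with $h'\in H$, $r'\in R$, giving $q(g_i)^{-1}q(g_{i+1})=h'=r_is_i^{\epsilon}(r')^{-1}$; as the triple $(r_i,s_i,r')$ ranges over finite sets, $h'$ ranges over a finite subset of $H$, so its $T$-length is bounded by a finite constant $D$. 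Taking $\kappa=\max(L+C,D)$ gives $\dist_{(H,T)}(q(g_i),q(g_{i+1}))\le\kappa$ in both cases, hence $\dist_{(H,T)}(a,b)\le\kappa\ell$.

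The main obstacle is the first case, and it is precisely where the uniformity hypothesis is indispensable: a single $T$-letter $s_i$ is transported to the conjugate $r_is_i^{\epsilon}r_i^{-1}$, whose $T$-length must be controlled uniformly in $s_i$, which is possible exactly because conjugation by the finitely many transversal elements is a uniform quasi-isometry of $\Gamma(H,T)$. If $T$ were conjugation-invariant this conjugation would be an isometry and the bound trivial, but in general one genuinely needs the uniform-quasi-action assumption as packaged by \Cref{lem:Friday}; the second case is purely a finiteness bookkeeping argument using that $R$ is finite. Combining the upper bound, the lower bound, and coarse density shows that $\iota$ is a quasi-isometry, completing the proof.
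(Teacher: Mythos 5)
Your proof is correct, and while its skeleton matches the paper's (the two metric inequalities plus coarse density, the easy direction coming from $\Gamma(H,T)$ being a subgraph of $\Gamma(G,S)$ together with $\dist_{(G,S)}(g,q(g))\leq 1$), the substantive lower bound is handled by a genuinely different route. The paper observes that $S=T\cup R$ is contained in the abstract generating set constructed in the proof of \Cref{lem:0423-01}, invokes \Cref{lem:Friday} to verify the uniformity hypothesis of that proposition, and then cites the first claim of that proof verbatim to get $\dist_{(H,T)}(q(a),q(b))\leq \bar L\,\dist_{(G,S)}(a,b)$. You instead re-derive the bound from scratch: project an $S$-geodesic to $H$ via the retraction $q$ and bound each increment by a case analysis on whether the letter lies in $T$ or in $R$ --- for $T$-letters the increment is the conjugate $r_is_i^{\pm1}r_i^{-1}$, whose $T$-length is at most $L+C$ because conjugation fixes $e$ and is an $(L,C)$-quasi-isometry of $\Gamma(H,T)$ by hypothesis, and for $R$-letters the increment $h'=r_is_i^{\pm1}(r')^{-1}$ ranges over a finite subset of $H$ and so has bounded $T$-length. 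Your version is more self-contained and arguably more transparent, since it uses the conjugation hypothesis directly; in particular the appeal to \Cref{lem:Friday} is superfluous in your write-up (it is genuinely needed in the paper's route, which goes through the induced quasi-action $q_g$ rather than through $\Ad(g)$). The paper's route buys brevity by reusing the general computation already carried out in \Cref{lem:0423-01}. One cosmetic point: your identities $q(g_0)=a$ and $q(g_\ell)=b$ implicitly normalise the transversal so that $R\cap H=\{e\}$; otherwise $q|_H$ is only at uniformly bounded distance from the identity, which is harmless but should be said.
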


We divert the proof of the proposition  after the following lemma.

\begin{lemma}\label{lem:Friday}
Let $H$ be a finite index normal subgroup of a finitely generated group $G$. Let $T$ be a generating set of $H$ containing a finite generating set $T_0$, let $R$ be transversal of $H$ in $G$, let $S_0$ be a finite generating set of $G$, and let $q\colon \Gamma(G,S_0)\to \Gamma(H,T_0)$ be the quasi-isometry defined by $q(hg)=h$ for $h\in H$ and $g\in R$. The following statements are equivalent:
\begin{enumerate}
    \item The $G$-action by conjugation on $H$ is a uniform quasi-action on  $\Gamma(H,T)$.
    \item  The quasi-action of $G$ on $H$ induced by $q$ is uniform with respect to $T$.
\end{enumerate}
\end{lemma}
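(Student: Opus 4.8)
The plan is to compute the induced quasi-action $q_g$ explicitly in terms of conjugation maps, exploiting that $q$ is, up to bounded error, the retraction of $G$ onto $H$ along the transversal $R$.

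First I would take the quasi-inverse $\bar q$ to be the inclusion $H\hookrightarrow G$. Since $[G:H]<\infty$ the transversal $R$ is finite, so $\bar q\circ q$ displaces each point of $G$ by a bounded amount while $q\circ\bar q=\mathrm{id}_H$; thus $\bar q$ is a legitimate quasi-inverse, and since $q_g$ is well defined up to bounded distance (see \Cref{lem:quasiAction} and \Cref{def:UniformT}) this choice is harmless. Writing $c_x$ for the conjugation $y\mapsto xyx^{-1}$ and $L_x$ for left translation $y\mapsto xy$, the key computation is as follows. For $g\in G$ write $g=h_g r_g$ with $h_g\in H$ and $r_g\in R$; then for $h\in H$, normality of $H$ gives
\[
q_g(h)=q(h_g r_g h)=q\bigl(h_g(r_g h r_g^{-1})\,r_g\bigr)=h_g\,(r_g h r_g^{-1}),
\]
that is, $q_g=L_{h_g}\circ c_{r_g}$. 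Two features drive the proof: the word metric on $\Gamma(H,T)$ is left-invariant, so each $L_{h_g}$ is an isometry; and as $g$ varies, $r_g$ ranges over the finite set $R$, with $c_{r_g}$ equal to the conjugation map $c_g$ precisely when $g=r_g\in R$.

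Granting this, the implication (1)$\Rightarrow$(2) — the one that feeds \Cref{lem:0423-01} in the proof of \Cref{prop:FridayNight} — is immediate: if the conjugation action is $(L,C)$-uniform then in particular each $c_r$ with $r\in R$ is an $(L,C)$-quasi-isometry of $\Gamma(H,T)$, and post-composing with the isometry $L_{h_g}$ leaves the quasi-isometry constants (and coarse surjectivity) unchanged, so every $q_g=L_{h_g}\circ c_{r_g}$ is an $(L,C)$-quasi-isometry.

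For the converse I would first read off from $c_{r_g}=L_{h_g}^{-1}\circ q_g$ that (2) forces each of the finitely many maps $c_r$, $r\in R$, to be a quasi-isometry of $\Gamma(H,T)$ with uniform constants. The remaining, and I expect hardest, step is to promote control of the finite family $\{c_r:r\in R\}$ to the entire conjugation action $\{c_g:g\in G\}=\{c_{h'}\circ c_r:h'\in H,\ r\in R\}$. The obstruction is sharp: the difference between $q_g=L_{h_g}\circ c_{r_g}$ and $c_g=c_{h_g}\circ c_{r_g}$ is exactly the inner conjugation $c_{h_g}$ (equivalently $q_g=R_{h_g}\circ c_g$, right translation by $h_g$), and right translations are not isometries of $\Gamma(H,T)$ for a general generating set $T$. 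Closing this gap therefore requires extra control on how conjugation by elements of $H$ distorts the $T$-metric — control that is automatic when $T$ is invariant under conjugation by $G$, in which case every $c_g$ is outright an isometry of $\Gamma(H,T)$ and (1) holds for free — which is precisely the regime in which \Cref{prop:FridayNight} and its corollaries are applied.
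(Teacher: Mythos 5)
Your computation is exactly the one in the paper: taking $\bar q$ to be the inclusion $H\hookrightarrow G$ and writing $g=h_*g_*$ with $h_*\in H$ and $g_*\in R$, both arguments arrive at $q_g=L_{h_*}\circ\Ad(g_*)$, and both deduce (1)$\Rightarrow$(2) by noting that left translations are isometries of $\Gamma(H,T)$. That half of your argument is complete, correct, and identical in substance to the paper's; it is also the only direction invoked later (in \Cref{prop:FridayNight}, hence in \Cref{cor:Saturday}).

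For (2)$\Rightarrow$(1) you stop short of a proof: you extract uniform control of $\Ad(r)$ for the finitely many $r\in R$ and then observe that passing to all of $\{\Ad(g)\colon g\in G\}=\{\Ad(h)\circ\Ad(r)\colon h\in H,\ r\in R\}$ requires controlling $\Ad(h)$ for $h\in H$, which statement (2) does not supply. So, as a proof of the stated equivalence, your proposal is incomplete. However, the obstruction you isolate is genuine and is not resolved by the paper either: its proof of this direction is the single clause ``the second statement implies the first since the constants $L$ and $C$ hold for all conjugations,'' whereas the identity $q_g=L_{h_*}\circ\Ad(g_*)$ only yields those constants for conjugation by elements of $R$. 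Indeed (2) does not imply (1) in general. Take $G=H$ free on $\{a,b\}$ with $R=\{e\}$ and $T=\{b^{\pm1}\}\cup\{a^{\pm n}\colon n\geq 1\}$ as in \Cref{ex.MinasyanOsin}; then $q=\mathrm{id}$ and every $q_g=L_g$ is an isometry, so (2) holds with constants $(1,0)$, yet $\dist_{(H,T)}(e,a)=1$ while $\dist_{(H,T)}(e,b^{m}ab^{-m})\geq 2m$ (the number of letters $b^{\pm1}$ appearing in the reduced word is subadditive and is at most $1$ on each element of $T$), so no uniform $(L,C)$ works for the maps $\Ad(b^{m})$ and (1) fails; the same happens for a proper finite-index subgroup such as $H\times\ZZ_2$. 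The upshot is that the lemma should be read as the implication (1)$\Rightarrow$(2) --- which you prove and which is all the paper uses --- unless one adds a hypothesis closing the gap you describe, e.g.\ conjugation-invariance of $T$, under which every $\Ad(g)$ is an isometry and (1) is automatic.
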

\begin{proof}
Take as the quasi-inverse of $q$ the inclusion $H\hookrightarrow G$. For $h\in H$, let $L_h\colon H\to H$ be given by $L_h(x)=hx$, i.e. multiplication on the left. Note that $L_h\colon \Gamma(H,T)\to \Gamma(H,T)$ is an isometry for every $h\in H$.  
 
Let $g\in G$ and suppose that $g=h_*g_*$ where $h_*\in H$ and $g_*\in R$. Then
\[q_g(h)=q(gh)=q(ghg^{-1}h_*g_*)=ghg^{-1}h_*=h_*g_*hg_*^{-1}h_*^{-1}h_*=h_*g_*hg_*^{-1}\]
and hence 
\[q_g = L_{h_*}\circ \Ad({g_*}),\]
where $\Ad(g_*)$ is conjugation by $g_*$.  It follows $q_g\colon \Gamma(H,T)\to \Gamma(H,T)$ is an $(L,C)$-quasi-isometry for all $g\in G$ if and only if $\Ad({g_*}) \colon \Gamma(H,T)\to \Gamma(H,T)$ is an $(L,C)$-quasi-isometry for all $g_*\in R$.  In particular, the first statement implies the second by \Cref{rem:Uniform}, and the second statement implies the first since the constants $L$ and $C$ hold for all conjugations.
\end{proof}

\begin{proof}[Proof of \Cref{prop:FridayNight}.]
Let $T_0 \subset T$ be a finite generating set of $H$, let $S_0=T_0\cup R$. Note that $S_0$ is a finite generating set of $G$. Then $q\colon \Gamma(G,S_0)\to \Gamma(H,T_0)$ is a $(L_0,C_0)$ quasi-isometry for some $L_0\geq 1$ and $C_0\geq0$, and  the quasi-inverse $\bar q$ can be taken as the inclusion $\Gamma(H,T_0)\hookrightarrow \Gamma(G,S_0)$.
 
Observe that in $\Gamma(G,S)$ the vertices $g=hr$ and $q(g)=h$ are adjacent since $r\in S$. Therefore, if $[v_0,\ldots ,v_\ell]$ is a geodesic path in $\Gamma(H,T)$ from $q(a)$ to $q(b)$, then $[a, v_0,\ldots ,v_\ell,b]$ is a path in $\Gamma(G,S)$ from $a$ to $b$, and  hence
\[ \dist_{(G,S)}(a,b) \leq  \dist_{(H,T)}(q(a),q(b))+2. \]

We now prove the other inequality.  Since the $G$-action on $H$ by conjugation is a uniform quasi-action on $\Gamma(H,T)$, \Cref{lem:Friday} implies that the quasi-action of $G$ on $H$ induced by $q$ is $(L_1,C_1)$-uniform with respect to $T$, for some $L_1\geq 1$ and $C_1\geq0$.

 Let $K_0=L_0+C_0+1$.  Observe that
\[
\begin{split}
S \subseteq \{ f^{-1}g \in G \colon & \text{ there are }  h\in H   \text{ and }  t\in T \text{ such that } \\  & \dist_{(H, T_0)}(q(f),h)\leq K_0  \text{ and }  \dist_{(H,T_0)}(q(g),ht)\leq K_0  \} .
\end{split}\]
Indeed, let $s\in S=T\cup R$, there are two cases.  First, if $s\in T$ let $f=h=e$ and $g=t=s$; and second if $s\in R$ let $f=h=e$, $g=s$ and $t$ any element of $T_0$. Then, exactly as in the first claim in the proof of \Cref{lem:0423-01}, one defines a constant $\bar L=\bar L(L_1,C_1,K_0)$ and deduces the inequality
\begin{equation}
\dist_{(H,T)}(q(a),q(b)) \leq \bar L\dist_{(G,S)}(a,b).
 \end{equation}

It remains to show
 \begin{equation} 
\dist_{(G,S)}(a,b)\leq \dist_{(H,T)}(q(a),q(b))+2.    
\end{equation} 
for any $a,b\in G$, concluding the proof. This is clear since $\Gamma(H,T)$ is a subgraph of $\Gamma(G,T)$ and $\dist_{G,S}(g,q(g))\leq 1$ for any $g\in G$.
\end{proof}

The following example by Minasyan and Osin illustrates the need for the hypothesis relating to the conjugation action in Corollary~\ref{prop:FridayNight}.

\begin{example}\cite{MiOs15c}\label{ex.MinasyanOsin}
Let $H=\langle a,b \rangle$ be the free group of rank two, let $G=\langle a,b,t \colon tat^{-1}=b,\quad t^2=e \rangle$, let $T=\{b,a,a^{-1},a^2,a^{-2},\ldots \}$ and $S=T\cup \{t\}$. The inclusion $\Gamma(H,T) \to \Gamma(G,S)$ is not a quasi-isometry. Indeed, in $G$ we have $ta^nt^{-1}=b^n$ and hence $\dist_{(G,S)}(e,b^n)=3$ but $\dist_{(H,T)}(e,b^n)=n$ for every $n$.  In particular, the map $\Gamma(H,T)\to \Gamma(H,T)$ given by $h\mapsto tht^{-1}$ is not a quasi-isometry, and hence the $G$-action on $H$ by conjugation is not an action by quasi-isometries.  
\end{example}

\section{Quasi-isometries  and Hyperbolically embedded subgroups}
\label{sec:Summary}

In this section, we will prove \Cref{propx:summary}.  The theorem is obtained by putting together a simple characterization of hyperbolically embedded subgroups in terms of coned-off Cayley graphs which appeared in work of Rashid and the second author, see~\cite[Propositions 1.5 and 5.8]{MPR2021}; some results about quasi-isometries of pairs  from~\cite{HuMaSa21a}, and some basic facts about hyperbolically embedded subgroups from~\cite{DGOsin2017}. Below we state these results and then we discuss the proof of \Cref{prop:summary}.

\begin{definition}[Reduced collections]
 A collection of subgroups $\calq$ of a group $H$ is \emph{reduced} if  
for any $P,Q\in \calq$ and $g\in H$, if $P$ and $gQg^{-1}$ are commensurable subgroups then  $P=Q$ and $g\in P$.
\end{definition}

\begin{remark}
An almost malnormal collection is reduced.
\end{remark}

\begin{definition}[Fine]\label{def:fine2}
Let $\Gamma$ be a graph and let $v$ be a vertex of  $\Gamma$. Let \begin{align*}
T_v \Gamma = \{w \in V(\Gamma) \mid \{v,w\}\in E(\Gamma)\}.
\end{align*}
denote the set of the  vertices adjacent to $v$.
For $x,y \in T_v \Gamma$,  the \emph{angle metric} $\angle_v (x,y)$ is the length of the shortest path in the graph  $\Gamma \setminus \{v\}$ between $x$ and $y$, with $\angle_v (x,y) = \infty$ if there is no such path.  The graph $\Gamma$ is \emph{fine at $v$} if $(T_v \Gamma,\angle_v)$ is a locally finite metric space. The graph $\Gamma$ is \emph{fine at    $C \subseteq V(\Gamma)$} if $\Gamma$ is fine at $v$  for all $v \in C$.  
\end{definition} 

\begin{definition}[Coned-off Cayley graph]\label{def:conedOffCayleyGraph}
Let $G$ be a group, let $\calp$ be an arbitrary collection of subgroups of $G$, and let $S$ be a subset of $G$. Denote by $G/\calp$ the set of all cosets $gP$ with $g\in G$ and $P\in\calP$. The \emph{coned-off Cayley graph  of $G$ with respect to $\calp$} is the graph $\hat\Gamma(G,\calp,S)$ with vertex set $G\cup G/\calp$ and edges are of the following type
\begin{itemize}
    \item $\{g,gs\}$ for $s\in S$,
    \item $\{x, gP\}$ for $g\in G$, $P\in \calp$ and $x\in gP$.
\end{itemize}
We call vertices of the form $gP$ \emph{cone points}. 
\end{definition}

\begin{prop}\emph{\cite{MPR2021}} \label{prop:HypEmbHatGamma}
Let $\calp$ be a collection of  infinite subgroups of $G$ and let $S$ be a subset of $G$. Then $\calp \hookrightarrow_h (G,S) $ if and only if the Coned-off Cayley graph $\hat{\Gamma}(G,\calp,S)$ is a connected hyperbolic graph which is fine at every cone vertex. 
\end{prop}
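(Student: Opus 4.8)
The plan is to reduce the statement to the original definition of hyperbolically embedded subgroups of Dahmani--Guirardel--Osin \cite{DGOsin2017}, which is phrased not via the coned-off graph but via the \emph{relative Cayley graph} $\Gamma(G,S\sqcup\calh)$, where $\calh=\bigsqcup_{P\in\calp}P$, together with a family of \emph{relative metrics} $\hat d_P$ on each $P\in\calp$. Recall that in that framework $\calp\hookrightarrow_h(G,S)$ means exactly that (i) $S\cup\bigcup_{P\in\calp}P$ generates $G$ and $\Gamma(G,S\sqcup\calh)$ is hyperbolic, and (ii) each $(P,\hat d_P)$ is a proper (equivalently, locally finite) metric space, where $\hat d_P(x,y)$ is the length of a shortest path from $x$ to $y$ in $\Gamma(G,S\sqcup\calh)$ using no internal edge of the coset $1\cdot P$. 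The whole proof is then a translation of the pair (i),(ii) into the pair ``connected hyperbolic'' plus ``fine at every cone vertex''.

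First I would compare the two graphs directly. They share the vertex set $G$, with $\hat\Gamma(G,\calp,S)$ carrying in addition the cone points $G/\calp$, and the identity on $G$ is a $G$-equivariant quasi-isometry: an internal edge $\{x,xh\}$ of a coset $gP$ in $\Gamma(G,S\sqcup\calh)$ is replaced in $\hat\Gamma(G,\calp,S)$ by the length-two path $x-gP-xh$ through the cone point, while conversely every cone point is adjacent to a group vertex, so the inclusion $G\hookrightarrow\hat\Gamma(G,\calp,S)$ is quasi-surjective. Consequently both graphs are connected under the same hypothesis, namely that $S\cup\bigcup_{P\in\calp}P$ generates $G$; and, being geodesic and quasi-isometric, one is Gromov hyperbolic if and only if the other is. This disposes of condition (i) and the connectivity clause.

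The heart of the argument is to match the properness condition (ii) with fineness at the cone vertices. The key observation is that deleting a single cone vertex $gP$ from $\hat\Gamma(G,\calp,S)$ corresponds exactly to forbidding the internal edges of the coset $gP$ in $\Gamma(G,S\sqcup\calh)$, while leaving every other coset and every other cone vertex (for the same or a different $P'\in\calp$) available. Unfolding cone-point passages into single coset edges and vice versa, one obtains for $x,y\in P$ the clean comparison $\hat d_P(x,y)\le\angle_P(x,y)\le 2\,\hat d_P(x,y)$, so that $(P,\hat d_P)$ is proper if and only if $(T_P\hat\Gamma,\angle_P)$ is locally finite. Since the $G$-action permutes the cone vertices and acts by isometries both on $\hat\Gamma(G,\calp,S)$ and on the transported relative metrics, fineness at the cone vertex $gP$ is equivalent to fineness at $1\cdot P$; thus it suffices to treat one representative coset per member of $\calp$, and assembling the two equivalences yields the proposition.

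The step I expect to be the main obstacle is making this dictionary between $\angle_P$ and $\hat d_P$ fully rigorous: one must verify with explicit constants that a shortest path avoiding the cone vertex $P$ converts, with controlled length, into a path in $\Gamma(G,S\sqcup\calh)$ using no internal edge of $1\cdot P$, and conversely, keeping careful track of passages through other cone vertices $gP$ of the same subgroup and through cone vertices of the remaining members of $\calp$. Once this comparison is in place, both the hyperbolicity transfer and the local-finiteness versus properness equivalence are immediate, and $G$-equivariance reduces the verification to finitely many cosets.
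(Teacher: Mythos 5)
The paper does not prove this proposition itself --- it is imported from \cite{MPR2021} (Propositions 1.5 and 5.8) --- and your argument is essentially the proof given there: one translates the Dahmani--Guirardel--Osin definition via the relative Cayley graph $\Gamma(G,S\sqcup\calh)$, using the $G$-equivariant quasi-isometry that collapses each cone passage $x-gP-xh$ to the internal edge $\{x,xh\}$ to transfer connectivity and hyperbolicity, and the bi-Lipschitz comparison $\hat d_P\le\angle_P\le 2\,\hat d_P$ to identify local finiteness of the relative metric with fineness at the cone vertex. Your bookkeeping of passages through cone vertices $gP$ with $g\notin P$ (which yield admissible edges internal to \emph{other} cosets of $P$) and through cone vertices of other members of $\calp$ is exactly the point that needs care, and it is handled correctly.
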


\begin{proposition}\emph{\cite[Proposition~5.6]{HuMaSa21a}}\label{prop:FineQI-2}
Let  $G$ and $H$ be groups, let $S\subset G$ and $T\subset H$, and let $S_0\subset S$ and $T_0\subset T$ be finite generating sets of $G$ and $H$ respectively. Consider collections $\calP$ and $\mathcal Q$ of subgroups of $G$ and $H$ respectively. Let  $q\colon G\to H$ be a function.

Suppose $q$ is a quasi-isometry $\Gamma(G,S) \to \Gamma(H,T)$, is a 
  quasi-isometry of pairs $(G, \mathcal{P},S_0)\to (H, \mathcal{Q},T_0)$, and 
   $\dot{q}$ is a bijection $G/\calp \to H/\calq$. 
   \begin{enumerate}
   \item Let $\hat q = q\cup \dot{q}$, then $\hat q$ is a   quasi-isometry   $\hat \Gamma (G,\calp, S) \to \hat   \Gamma (H,\calq, T)$.\label{prop:FineQI-2.1}
   \item If $\hat \Gamma (H,\calq, T)$ is fine at cone vertices, then $\hat \Gamma (G,\calp, S)$ is fine at cone vertices.\label{prop:FineQI-2.2}
   \item If $\calq\hookrightarrow_h(H,T)$, then $\calp\hookrightarrow_h(G,S)$.\label{prop:FineQI-2.3}
   \end{enumerate}
 \end{proposition}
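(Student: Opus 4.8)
The plan is to handle the three parts in turn, keeping $\hat q=q\cup\dot q$ throughout and reducing everything to the two edge types of a coned-off Cayley graph. For part~\eqref{prop:FineQI-2.1} I would first unpack the hypotheses into four pieces of control: $q$ is a quasi-isometry $\Gamma(G,S)\to\Gamma(H,T)$; being a quasi-isometry of pairs with respect to $S_0,T_0$ forces $q$ to also be a quasi-isometry $\Gamma(G,S_0)\to\Gamma(H,T_0)$ of the \emph{locally finite} Cayley graphs; the relation $\dot q$ is a bijection $G/\calp\to H/\calq$ with uniformly bounded Hausdorff displacement; and $q$ has a quasi-inverse $\bar q$ which is again a quasi-isometry of pairs whose induced relation is $\dot q^{-1}$ (this last point is the one routine verification, obtained by applying $\bar q$ to the Hausdorff estimates). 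Then I would invoke the standard ``bounded on edges, with a coarse inverse bounded on edges whose compositions are coarsely the identity'' criterion: a Cayley edge $\{g,gs\}$ maps to a pair at $\dist_{(H,T)}$-distance at most $L+C$; a cone edge $\{x,gP\}$ maps to $q(x)$ together with the cone point $\dot q(gP)=B$, which lie within $M+1$ in $\hat\Gamma(H,\calq,T)$ since $q(x)$ is within $M$ of the coset $B$. Running the same estimates for $\hat{\bar q}=\bar q\cup\dot q^{-1}$, and noting that $\hat{\bar q}\circ\hat q$ is the identity on cone points and boundedly close to the identity on group vertices, shows $\hat q$ is a quasi-isometry.

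For part~\eqref{prop:FineQI-2.2} I would fix a cone vertex $gP$ of $\hat\Gamma(G,\calp,S)$, set $B=\dot q(gP)$, and note that the neighbours of $gP$ are exactly the coset $gP$ and similarly for $B$. The heart of the matter is an angle-metric comparison: there is a constant $D$ with $\angle_B(b_x,b_y)\le D\,\angle_{gP}(x,y)+D$, where $b_x\in B$ is chosen within $M$ of $q(x)$ in $\dist_{(H,T_0)}$. To prove it I would transport a path realising $\angle_{gP}(x,y)$ edge by edge: a Cayley edge maps to a $T$-path, which lives in the Cayley graph and hence touches no cone point; a cone edge at a cone point $g'P'\neq gP$ maps, by \emph{bijectivity} of $\dot q$, through the cone point $\dot q(g'P')\neq B$, again avoiding $B$; finally one prepends and appends short $T_0$-paths joining $q(x)$ to $b_x$ and $q(y)$ to $b_y$. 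The result is a path avoiding $B$ of length $O(\angle_{gP}(x,y))$, giving the comparison.

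The main obstacle, and the step I expect to demand the most care, is converting this comparison into genuine \emph{local finiteness}, since a quasi-isometric embedding by itself never transfers local finiteness of the angle metric. This is exactly where the finite generating sets are indispensable. Fixing a neighbour $x_0$ of $gP$ and a radius $n$, fineness of $\hat\Gamma(H,\calq,T)$ at $B$ makes $\{b\in B:\angle_B(b_{x_0},b)\le Dn+D\}$ finite, so the points $b_y$ attached to neighbours $y$ with $\angle_{gP}(x_0,y)\le n$ take only finitely many values. To bound the fibres I would argue that, for fixed $b$, any such $y$ has $q(y)$ inside the $\dist_{(H,T_0)}$-ball of radius $M$ about $b$, which is finite because $\Gamma(H,T_0)$ is locally finite, while the point-preimages of $q$ are finite because $q$ is a quasi-isometry between the locally finite graphs $\Gamma(G,S_0)$ and $\Gamma(H,T_0)$. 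Hence only finitely many $y$ satisfy $\angle_{gP}(x_0,y)\le n$, which is fineness at $gP$; as $\calp$ is finite there are finitely many orbits of cone vertices, completing the part.

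Part~\eqref{prop:FineQI-2.3} is then formal. By \Cref{prop:HypEmbHatGamma}, the hypothesis $\calq\hookrightarrow_h(H,T)$ says that $\hat\Gamma(H,\calq,T)$ is connected, hyperbolic, and fine at cone vertices. Connectedness and hyperbolicity are quasi-isometry invariants, so they pass to $\hat\Gamma(G,\calp,S)$ through the quasi-isometry $\hat q$ of part~\eqref{prop:FineQI-2.1}, while fineness at cone vertices is exactly part~\eqref{prop:FineQI-2.2}. Applying the converse direction of \Cref{prop:HypEmbHatGamma} then yields $\calp\hookrightarrow_h(G,S)$.
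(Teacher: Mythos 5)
Your proof is correct, and for the only part the paper actually proves it coincides with the paper's argument: the paper takes items \eqref{prop:FineQI-2.1} and \eqref{prop:FineQI-2.2} verbatim from \cite[Proposition~5.6]{HuMaSa21a} and proves only item \eqref{prop:FineQI-2.3}, by exactly your formal deduction from \Cref{prop:HypEmbHatGamma} together with quasi-isometry invariance of connectedness and hyperbolicity and the fineness transfer of item \eqref{prop:FineQI-2.2}. Your self-contained proofs of \eqref{prop:FineQI-2.1} and \eqref{prop:FineQI-2.2} are sound and supply what the paper outsources: the coarse-Lipschitz-on-edges criterion with the coarse inverse $\hat{\bar q}=\bar q\cup\dot q^{-1}$ handles \eqref{prop:FineQI-2.1}, and in \eqref{prop:FineQI-2.2} you correctly isolate the genuine issue, namely that the angle-metric comparison $\angle_B(b_x,b_y)\le D\,\angle_{gP}(x,y)+D$ (which uses injectivity of $\dot q$ so that transported paths avoid $B$) does not by itself transfer local finiteness; your resolution via the locally finite graphs $\Gamma(G,S_0)$, $\Gamma(H,T_0)$ and the finiteness of the fibres of $q$ is the right one. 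Two cosmetic points: the hypothesis that $q$ is a quasi-isometry of pairs $(G,\calp,S_0)\to(H,\calq,T_0)$ already presupposes (rather than ``forces'') that $q$ is a quasi-isometry for the $S_0$- and $T_0$-word metrics; and your closing appeal to finiteness of $\calp$ in part \eqref{prop:FineQI-2.2} is unnecessary (and not among the hypotheses), since fineness at cone vertices is verified pointwise at each cone vertex.
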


Items \eqref{prop:FineQI-2.1} and \eqref{prop:FineQI-2.2} of Proposition~\ref{prop:FineQI-2} are taken from~\cite[Proposition~5.6]{HuMaSa21a}, and the last item is a direct consequence of Proposition~\ref{prop:HypEmbHatGamma}.

 \begin{proposition}\emph{\cite[Proposition 5.12]{HuMaSa21a}\label{prop:DotqIsFunction}}
 Let $q\colon (G, \mathcal{P})\to (H, \mathcal{Q})$ be a $(L,C,M)$-quasi-isometry of  pairs. Then 
 \begin{enumerate}
     \item $\dot{q}$ is a surjective function $G/\calp \to H/\calq$ if $\calq$ is reduced. 
     \item $\dot{q}$ is a bijection $G/\calp \to H/\calq$ if $\calp$ and $\calq$ are reduced. 
 \end{enumerate}
 \end{proposition}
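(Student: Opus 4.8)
The plan is to treat $\dot q$, which a priori is only a relation, and extract from it a function. Totality of $\dot q$ over $G/\calp$ and surjectivity onto $H/\calq$ are exactly the two surjectivity conditions built into the definition of a quasi-isometry of pairs, so the only substantive points are single-valuedness (for item (1)) and injectivity (for item (2)). Both reduce to a single elementary metric lemma that I would isolate first: for subgroups $P,Q$ of a finitely generated group $K$ equipped with a word metric and any elements $a,b\in K$, if the cosets $aP$ and $bQ$ lie at finite Hausdorff distance, then $P$ and $(a^{-1}b)Q(a^{-1}b)^{-1}$ are commensurable. This lemma, though not deep, is where the real work sits, so I expect it to be the main obstacle.

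For the lemma I would first translate by $a^{-1}$, an isometry, to reduce to $\Hdist(P,gQ)<\infty$ with $g=a^{-1}b$; then write $gQ=(gQg^{-1})g$, so that $\Hdist(P,gQg^{-1})\le \Hdist(P,gQ)+|g|<\infty$. This reduces everything to the assertion that two \emph{subgroups} at finite Hausdorff distance are commensurable. That assertion I would prove by a pigeonhole argument: if $\Hdist(P',Q')\le N$, then each $p\in P'$ admits some $u$ in the finite $N$-ball about the identity with $pu\in Q'$, and two elements of $P'$ producing the same $u$ differ by an element of $P'\cap Q'$; finiteness of the $N$-ball then bounds the index $[P':P'\cap Q']$, and the symmetric estimate gives commensurability.

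With the lemma available, item (1) is immediate. If $(A,B_1),(A,B_2)\in\dot q$, then the triangle inequality for Hausdorff distance gives $\Hdist_H(B_1,B_2)<2M$; writing $B_1=h_1Q_1$ and $B_2=h_2Q_2$ with $Q_1,Q_2\in\calq$, the lemma makes $Q_1$ and $(h_1^{-1}h_2)Q_2(h_1^{-1}h_2)^{-1}$ commensurable, so the reducedness of $\calq$ forces $Q_1=Q_2$ and $h_1^{-1}h_2\in Q_1$, whence $B_1=B_2$. Thus $\dot q$ is single-valued, and combined with the definitional surjectivity it is a surjective function.

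For item (2), by part (1) it only remains to establish injectivity, and here I would exploit the quasi-isometry estimate rather than do Hausdorff bookkeeping in $H$. If $(A_1,B),(A_2,B)\in\dot q$, then $\Hdist_H(q(A_1),q(A_2))<2M$, and the lower Lipschitz bound of the $(L,C)$-quasi-isometry $q$ converts this into $\Hdist_G(A_1,A_2)\le L(2M+C)$. Applying the lemma once more, now to the cosets $A_1,A_2$ of members of $\calp$ in $G$, and invoking that $\calp$ is reduced, yields $A_1=A_2$. Hence $\dot q$ is injective, and together with part (1) it is a bijection.
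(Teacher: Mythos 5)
Your proof is correct; the paper does not actually prove this statement but quotes it from \cite{HuMaSa21a}, and your argument is essentially the standard one behind that result: reduce single-valuedness and injectivity to the fact that two cosets at finite Hausdorff distance force the corresponding conjugate subgroups to be commensurable (via the pigeonhole bound on $[P':P'\cap Q']$ by the size of a ball), and then let reducedness pin down the coset. The one hypothesis you use implicitly --- that balls in the metrics on $G$ and $H$ are finite, which the pigeonhole step needs --- is supplied by the paper's convention that these are word metrics coming from finite generating sets on finitely generated groups, so the argument goes through as written.
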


\begin{proposition}\emph{\cite[Proposition 6.2]{HuMaSa21a}}\label{lem:QI-Refinement}
Let $\calp^\ast$ be a refinement of a finite collection of subgroups $\calp$ of a finitely generated group $G$. If $P$ is a finite index subgroup of $\Comm_G(P)$ for every $P\in\calp$, then $(G,\calp)$ and $(G,\calp^*)$ are quasi-isometric pairs via the identity map on $G$. 
\end{proposition}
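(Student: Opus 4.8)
The plan is to verify the definition of a quasi-isometry of group pairs directly for $q=\mathrm{id}_G$. Fix a (symmetric) finite generating set of $G$ and the associated left-invariant word metric $\dist_G$. Since the identity is a $(1,0)$-quasi-isometry $G\to G$, it suffices to produce a single constant $M$ for which both projections of the relation $\dot{\mathrm{id}}$ are surjective; equivalently, every left coset appearing in $G/\calp$ lies within Hausdorff distance $M$ of a coset in $G/\calp^\ast$, and vice versa. Two elementary observations drive everything. First, if $K\leq L$ is a finite-index inclusion, then writing $L$ as a finite union of right cosets $Kg_1,\dots,Kg_n$ with $g_i\in L$ gives $\Hdist_G(K,L)\leq \max_i \dist_G(e,g_i)<\infty$. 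Second, right multiplication by a fixed $y$ displaces every point by exactly $\dist_G(e,y)$, so $\Hdist_G(A,Ay)\leq \dist_G(e,y)$ for any $A\subseteq G$; this is the device that converts conjugates into genuine left cosets at bounded Hausdorff distance. Throughout I use that left multiplication is an isometry, so any bound obtained for $P$ itself transfers verbatim to every coset $gP$.

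For the forward direction, fix $P\in\calp$. By hypothesis $P$ has finite index in $\Comm_G(P)$, so $\Hdist_G(P,\Comm_G(P))=:d_P<\infty$ by the first observation. Since $\Comm_G(gPg^{-1})=g\Comm_G(P)g^{-1}$, the subgroup $\Comm_G(P)$ is conjugate to a unique representative $P^\ast\in\calp^\ast$, say $\Comm_G(P)=x_PP^\ast x_P^{-1}$ with $x_P$ depending only on $P$. The second observation then gives $\Hdist_G(\Comm_G(P),x_PP^\ast)=\Hdist_G(x_PP^\ast x_P^{-1},x_PP^\ast)\leq \dist_G(e,x_P^{-1})$, and the triangle inequality places $P$ within a finite distance of the genuine left coset $x_PP^\ast$. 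Translating by any $g$ shows $gP$ lies within the same distance of $gx_PP^\ast\in G/\calp^\ast$. As $\calp$ is finite, taking the maximum of the resulting bounds over $P\in\calp$ yields a uniform constant, so the projection of $\dot{\mathrm{id}}$ onto $G/\calp$ is surjective.

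The converse is symmetric and is where the only real subtlety lies. Each $P^\ast\in\calp^\ast$ is by construction of the form $\Comm_G(g_0P_0g_0^{-1})=g_0\Comm_G(P_0)g_0^{-1}$ for some $P_0\in\calp$ and $g_0\in G$, and $g_0P_0g_0^{-1}$ is finite index in $P^\ast$, so $\Hdist_G(P^\ast,g_0P_0g_0^{-1})<\infty$. Applying the right-multiplication device again, $\Hdist_G(g_0P_0g_0^{-1},g_0P_0)\leq \dist_G(e,g_0^{-1})$, and the triangle inequality places $P^\ast$ within finite distance of the left coset $g_0P_0$; translating by $g'$ matches $g'P^\ast$ with $g'g_0P_0\in G/\calp$. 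The point to watch is precisely that the refinement is built from conjugates of commensurators while the collections $G/\calp$ and $G/\calp^\ast$ consist of left cosets, so one cannot match a coset to its commensurator directly; the right-translation estimate is exactly what bridges this gap, and one must record that the conjugating elements $x_P$ and $g_0$ depend only on the finitely many members of $\calp$ and $\calp^\ast$ and not on the sliding coset representative, which is what makes $M$ uniform. Taking $M$ larger than all the finitely many bounds produced in the two directions exhibits $\mathrm{id}_G$ as a $(1,0,M)$-quasi-isometry of pairs $(G,\calp)\to(G,\calp^\ast)$.
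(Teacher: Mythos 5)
The paper does not prove this statement; it imports it verbatim as \cite[Proposition~6.2]{HuMaSa21a}, so there is no in-paper argument to compare against. Your direct verification is correct and complete: the two displacement estimates (finite index gives finite Hausdorff distance between a subgroup and its overgroup, and right translation moves a set by at most $\dist_G(e,y)$) are exactly what is needed to pass between a conjugate $xP^\ast x^{-1}$ and the left coset $xP^\ast$, the left-invariance of the word metric makes the bounds coset-independent, and the finiteness of $\calp$ (hence of $\calp^\ast$, since $\Comm_G(gPg^{-1})=g\Comm_G(P)g^{-1}$ forces at most $|\calp|$ conjugacy classes) yields the uniform constant $M$ required by \Cref{defn:quasi-isometry-pairs}. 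This is the standard argument one would expect for the cited result.
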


\begin{proposition}\emph{\cite[Proposition 6.7]{HuMaSa21a}}\label{prop:MalnormalityQIinvariance}
 Let $q\colon (G, \mathcal{P})\to (H, \mathcal{Q})$ be a quasi-isometry of pairs.  If $\calq$ is an almost malnormal finite collection of infinite  subgroups and $\calp$ is a finite collection, then any refinement $\calp^*$ of $\calp$ is almost malnormal.
\end{proposition}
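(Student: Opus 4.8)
Throughout fix word metrics on $G$ and $H$; for a subset $X$ and $r\geq 0$ write $N_r(X)$ for its $r$-neighbourhood. The plan is to recast almost malnormality as a purely metric, quasi-isometry-of-pairs invariant condition and then push it across $q$. The reformulation I would isolate is: a finite collection $\cals$ of infinite subgroups of a finitely generated group $K$ is almost malnormal if and only if for every $r\geq 0$ there is $\rho=\rho(r)\geq 0$ such that $\diam\big(N_r(C)\cap N_r(C')\big)\leq \rho$ for all distinct cosets $C\neq C'$ in $K/\cals$. The implication from the coarse bound to almost malnormality is a direct computation: if $P\cap cQc^{-1}$ were infinite with $(P,Q,c)$ outside the excluded case $P=Q,\ c\in P$, then right-translating this infinite subgroup by $c$ produces an infinite-diameter subset of $N_{|c|}(P)\cap N_{|c|}(cQ)$ with $P\neq cQ$, violating the bound at $r=|c|$. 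The converse is the only genuinely technical point of this step: using homogeneity one normalises a hypothetical family of distinct cosets whose coarse intersections have growing diameter so that both cosets lie within distance $r$ of the identity; finiteness of the $r$-ball and of $\cals$ then fixes the two cosets along a subsequence, and a pigeonhole argument on the bounded ``error'' words exhibits infinitely many elements of a single intersection $P\cap cQc^{-1}$, contradicting almost malnormality.

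Next I would transport the bound across $q$ to control commensurators in $G$. For $c\in\Comm_G(P)$ the subgroup $P\cap cPc^{-1}$ is infinite, so by the computation above the cosets $P$ and $cP$ have infinite coarse intersection in $G$; applying $q$ preserves this, so $q(P)$ and $q(cP)$ have infinite coarse intersection in $H$. Since each lies within $M$ of a coset in $H/\calq$ and $\calq$ is almost malnormal, the criterion in $H$ forces these two $\calq$-cosets to coincide, say with common value $B$. Pulling back through a quasi-inverse then shows $q(cP)\subseteq N_{R}(q(P))$ and hence $c\in N_{R_0}(P)$ for a constant $R_0$ depending only on the $(L,C,M)$ constants, \emph{not} on $c$. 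Therefore $\Comm_G(P)\subseteq N_{R_0}(P)$, and a subgroup contained in a bounded neighbourhood of $P$ meets only finitely many cosets of $P$, so $P$ has finite index in $\Comm_G(P)$ for every $P\in\calp$. I expect this to be the main obstacle: the entire argument hinges on the \emph{uniformity} of $R_0$, and uniformity is precisely what the constant $M$ of the quasi-isometry of pairs supplies over and above the bare quasi-isometry $q$.

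With finite index in the commensurator established for every $P\in\calp$, \Cref{lem:QI-Refinement} applies and tells us the identity map is a quasi-isometry of pairs $(G,\calp)\to(G,\calp^*)$. Composing with $q$ yields a quasi-isometry of pairs $(G,\calp^*)\to(H,\calq)$. A short commensurator computation shows $\calp^*$ is reduced: each member $R\in\calp^*$ is commensurable to a conjugate of some $P\in\calp$, hence $\Comm_G(R)=\Comm_G(P)=R$, so $R$ is self-commensurating, and this self-commensurating property of the (non-conjugate) representatives forces the reducedness condition. Since $\calq$ is almost malnormal it is in particular reduced, so by \Cref{prop:DotqIsFunction} the relation $\dot q$ is a bijection $G/\calp^*\to H/\calq$.

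Finally I would run the metric criterion in reverse for $\calp^*$. Take distinct cosets $C\neq C'$ in $G/\calp^*$; since $\dot q$ is a bijection, $\dot q(C)$ and $\dot q(C')$ are distinct cosets of $\calq$, which by almost malnormality of $\calq$ and the criterion in $H$ have coarse intersection of diameter at most $\rho(r)$ at each scale $r$. Transporting this bound back through a quasi-inverse of the composed quasi-isometry of pairs produces a uniform bound on $\diam\big(N_r(C)\cap N_r(C')\big)$ for all distinct $\calp^*$-cosets. The coarse-intersection criterion then certifies that $\calp^*$ is almost malnormal, which is the desired conclusion.
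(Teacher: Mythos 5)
The paper does not actually prove this statement --- it is imported wholesale from \cite[Proposition 6.7]{HuMaSa21a} --- so there is no in-text argument to compare against, but your proposal is correct and follows what is, as far as I can tell, the same strategy as the cited source: characterise almost malnormality of a finite collection of infinite subgroups by uniformly bounded coarse intersections of distinct cosets, transport that bound across the quasi-isometry of pairs to obtain $[\Comm_G(P):P]<\infty$ for each $P\in\calp$, pass to the reduced refinement $\calp^*$ via \Cref{lem:QI-Refinement} and \Cref{prop:DotqIsFunction}, and pull the coarse bound back. The only detail worth making explicit is that each $P\in\calp$ is automatically infinite (its cosets lie at Hausdorff distance at most $M$ from cosets of the infinite members of $\calq$), which is what licenses your assertion that $P\cap cPc^{-1}$ is infinite for $c\in\Comm_G(P)$; with that observation added, the argument is complete modulo the routine pigeonhole computations you correctly sketch.
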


\begin{proposition}\emph{\cite[Proposition 4.33]{DGOsin2017}}\label{lem:malnormal}
Let $\calp$ be a collection of subgroups of a group $G$. If $\calp\hookrightarrow_h G$ then $\calp$ is an almost malnormal collection.
 \end{proposition}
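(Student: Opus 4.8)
The plan is to prove the contrapositive through the fineness criterion of \Cref{prop:HypEmbHatGamma}. Recall that $\calp$ is \emph{almost malnormal} if for all $P,Q\in\calp$ and $g\in G$ the intersection $P\cap gQg^{-1}$ is finite unless $P=Q$ and $g\in P$. So suppose $\calp\hookrightarrow_h(G,S)$ for some $S$, yet $\calp$ fails to be almost malnormal: there are $P,Q\in\calp$ and $g\in G$ with $K:=P\cap gQg^{-1}$ infinite while it is not the case that both $P=Q$ and $g\in P$. By \Cref{prop:HypEmbHatGamma} the coned-off Cayley graph $\hat\Gamma=\hat\Gamma(G,\calp,S)$ is fine at every cone vertex, and I will contradict fineness (\Cref{def:fine2}) at the cone vertex $v=eP$. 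Two structural remarks will be used throughout: $K$ is a subgroup (an intersection of subgroups), so every $m\in K$ lies in $P$ and is therefore a neighbour of $v$, i.e.\ a point of $T_v\hat\Gamma$; and for $m\in K\subseteq P$ the left multiplication $L_m$ is a graph automorphism of $\hat\Gamma$ fixing $v$ (since $mP=P$), hence restricts to an automorphism of $\hat\Gamma\setminus\{v\}$.

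The heart of the argument is to produce a single radius $R$, depending only on $g$, with $\angle_v(e,m)\le R$ for every $m\in K$. The key geometric input is that the second cone vertex $w=gQ$ is distinct from $v=eP$: an elementary check shows $gQ=eP$ as cosets forces $P=Q$ and $g\in P$, the excluded case. For $m\in K$ we have $m\in gQg^{-1}$, so $mg=g(g^{-1}mg)\in gQ$; thus $g$ and $mg$ are both neighbours of $w$, joined by the length-two path $g\text{ -- }w\text{ -- }mg$, which avoids $v$. Now fix once and for all a path $\alpha$ from $e$ to $g$ inside $\hat\Gamma\setminus\{v\}$, of length $\ell$. Its left translate $L_m(\alpha)$ runs from $m$ to $mg$ and again avoids $v$, because $L_m$ fixes $v$. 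Concatenating $\alpha$, the length-two detour through $w$, and $L_m(\alpha)$ reversed yields a path $e\to g\to w\to mg\to m$ of length at most $2\ell+2$ that misses $v$, whence $\angle_v(e,m)\le 2\ell+2=:R$ for all $m\in K$. Since $K$ is infinite, the $\angle_v$-ball of radius $R$ about $e$ contains the infinitely many distinct neighbours $\{m:m\in K\}$, contradicting local finiteness of $(T_v\hat\Gamma,\angle_v)$ and hence fineness at $v$. This contradiction forces $\calp$ to be almost malnormal.

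The main obstacle is the one step I took for granted: exhibiting the fixed bounded path $\alpha$ from $e$ to $g$ that avoids the single cone vertex $v=eP$ (equivalently, certifying that $e$ and $g$ lie at finite distance in $\hat\Gamma\setminus\{v\}$). This is genuinely delicate because the angle metric is defined only after deleting $v$, so an unguarded geodesic from $e$ to $g$ might be forced through $v$. I would construct $\alpha$ by writing $g$ as a word over $S\cup\bigcup\calp$, taking the induced edge path in $\hat\Gamma$, and rerouting any coning step that would pass through $eP$; this is exactly where the exclusion hypothesis is needed, since $w=gQ\neq eP$ provides the room to route around $v$. Everything else, namely the reduction to a uniform bound on $\angle_v(e,m)$ and the translation trick via $L_m$, is routine bookkeeping once $\alpha$ is in hand.
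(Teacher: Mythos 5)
The paper does not prove this statement; it is imported verbatim from \cite[Proposition 4.33]{DGOsin2017}. Your argument is essentially Dahmani--Guirardel--Osin's original one (a path $e\to g\to xg\to x$ whose outer halves are translates of each other and whose middle edge lives at the coset $gQ$), recast in the coned-off Cayley graph language of \Cref{prop:HypEmbHatGamma}, which is a natural move given how this paper uses that characterisation. The skeleton is correct: $K=P\cap gQg^{-1}\subseteq T_v\hat\Gamma$, the automorphism $L_m$ fixes $v=eP$ for $m\in K$, the cone vertex $w=gQ$ is distinct from $v$ exactly under the exclusion hypothesis, and the concatenation $e\to g\to w\to mg\to m$ bounds $\angle_v(e,m)$ uniformly. (One small caveat: \Cref{prop:HypEmbHatGamma} is stated for collections of \emph{infinite} subgroups, so you should first pass to the subcollection of infinite members of $\calp$ --- harmless, since finite members cannot witness a failure of almost malnormality, and subcollections of hyperbolically embedded collections are hyperbolically embedded.)

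The step you flag is indeed the only real gap, but your proposed repair does not work. Rerouting a coning step $x\to eP\to y$ (with $x,y\in P$) requires precisely a path from $x$ to $y$ in $\hat\Gamma\setminus\{v\}$, i.e.\ $\angle_v(x,y)<\infty$; that is not guaranteed a priori, it is unrelated to the fact that $gQ\neq eP$, and you cannot fall back on the Cayley graph $\Gamma(G,S)$ because $S$ need not generate $G$ (only $S\cup\bigcup\calp$ does). The correct fix is a normalisation rather than a rerouting: take any path $p$ from $e$ to $g$ in $\hat\Gamma$; if it meets $v$, let $y\in P$ be the vertex immediately after the \emph{last} occurrence of $v$ on $p$, and replace the pair $(g,K)$ by $(g',K')=(y^{-1}g,\,y^{-1}Ky)$. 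Then $K'$ is an infinite subgroup of $P\cap g'Q(g')^{-1}$, the excluded case is still excluded (if $P=Q$ and $g'\in P$ then $g=yg'\in P$), $g'Q\neq eP$, and $L_{y^{-1}}$ applied to the terminal segment of $p$ is a path $\alpha$ from $e$ to $g'$ avoiding $v$. With that substitution your bound $\angle_v(e,m')\leq 2\ell+2$ for the infinitely many $m'\in K'$ goes through verbatim and contradicts fineness at $v$.
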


 We are now ready to prove \Cref{propx:summary}.
 
\begin{theorem}[\Cref{propx:summary}]\label{prop:summary}
Let $q\colon G\to H$ be a quasi-isometry of finitely generated groups, let $\calp$ and $\calq$ be finite collections of subgroups of $G$ and $H$ respectively, and let $S$ and $T$ be (not necessarily finite) generating sets of $G$ and $H$ respectively. Suppose 
\begin{enumerate}
    \item $q\colon (G,\calp) \to (H,\calq)$ is a quasi-isometry of pairs, and
    \item $q\colon  \Gamma(G,S) \to  \Gamma(H,T)$ is a quasi-isometry.
\end{enumerate}
The following statements hold:
\begin{enumerate}
    \item If $\calp$ and $\calq$ are reduced collections in $G$ and $H$ respectively; then $\calp\hookrightarrow_h (G,S)$ if and only if $\calq\hookrightarrow_h (H,T)$.
    \item If $\calq$ contains only infinite subgroups and  $\calq\hookrightarrow_h (H,T)$ then $\calp^*\hookrightarrow_h (G,S)$.
\end{enumerate}
\end{theorem}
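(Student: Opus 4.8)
The plan is to deduce both conclusions from \Cref{prop:FineQI-2}. Begin with the first statement. Since $S$ and $T$ generate the finitely generated groups $G$ and $H$, each contains a finite generating set, so I fix finite $S_0\subseteq S$ and $T_0\subseteq T$ generating $G$ and $H$. As being a quasi-isometry of pairs is independent of the choice of finite word metric, hypothesis~(1) makes $q$ a quasi-isometry of pairs $(G,\calp,S_0)\to(H,\calq,T_0)$, while hypothesis~(2) makes $q$ a quasi-isometry $\Gamma(G,S)\to\Gamma(H,T)$. Because $\calp$ and $\calq$ are reduced, \Cref{prop:DotqIsFunction} shows that $\dot q\colon G/\calp\to H/\calq$ is a bijection. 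All the hypotheses of \Cref{prop:FineQI-2} are now in place, and its third item gives $\calq\hookrightarrow_h(H,T)\Rightarrow\calp\hookrightarrow_h(G,S)$. For the converse I would run the same argument with a quasi-inverse $\bar q$ in place of $q$: this $\bar q$ is again a quasi-isometry of pairs $(H,\calq)\to(G,\calp)$ whose associated relation is the inverse of $\dot q$, hence again a bijection, so the roles of the two pairs are symmetric and the reverse implication follows.

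For the second statement the plan is to pass to the refinement $\calp^\ast$ and then apply the first statement. First, $\calq\hookrightarrow_h(H,T)$ forces $\calq$ to be almost malnormal by \Cref{lem:malnormal}, hence reduced; moreover every $P\in\calp$ is infinite, since the quasi-isometry of pairs matches the coset $P$ to within Hausdorff distance $M$ of some infinite $\calq$-coset and $q$ is a quasi-isometry, so $P$ is unbounded. By \Cref{prop:MalnormalityQIinvariance} the refinement $\calp^\ast$ is almost malnormal, hence reduced, and its members are infinite. If $q$ induces a quasi-isometry of pairs $(G,\calp^\ast)\to(H,\calq)$, then the first statement applied to the reduced collections $\calp^\ast$ and $\calq$ yields $\calp^\ast\hookrightarrow_h(G,S)$ from $\calq\hookrightarrow_h(H,T)$, which finishes the proof.

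It therefore remains to produce the quasi-isometry of pairs $(G,\calp^\ast)\to(H,\calq)$. The tool is \Cref{lem:QI-Refinement}: once each $P\in\calp$ is known to have finite index in $\Comm_G(P)$, the identity is a quasi-isometry of pairs between $(G,\calp)$ and $(G,\calp^\ast)$, and composing it with $q\colon(G,\calp)\to(H,\calq)$ gives the desired map. I expect this finite-index condition to be the main obstacle, and it is precisely where almost malnormality of the target is used. The argument I would give runs as follows: for $c\in\Comm_G(P)$ the group $P\cap cPc^{-1}$ is infinite, and feeding an unbounded sequence from it through $q$ and the induced quasi-action shows that the cosets $P$ and $cP$ are carried within bounded Hausdorff distance of two $\calq$-cosets that have infinite coarse intersection. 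Almost malnormality of $\calq$ forces these two cosets to coincide, so $\Hdist_H(q(P),q(cP))$ is bounded independently of $c$; pulling this estimate back through $q$ bounds $\Hdist_G(P,cP)$ uniformly in $c$, whence $c$ lies in a fixed neighbourhood of $P$. Thus $\Comm_G(P)$ is at finite Hausdorff distance from $P$, that is, $P$ has finite index in $\Comm_G(P)$.

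The remaining ingredients are formal: the surjectivity and bijectivity of the coset relation are supplied by \Cref{prop:DotqIsFunction}, and I would only need to record that quasi-isometries of pairs compose and that a quasi-inverse of a quasi-isometry of pairs is again one. The genuine content is concentrated in the commensurator estimate of the previous paragraph, which converts the reduced (almost malnormal) structure of $\calq$ on the target into control of the peripheral structure $\calp$ on the source.
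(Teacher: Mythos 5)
Your proof follows essentially the same route as the paper's: statement (1) via the bijectivity of $\dot q$ from \Cref{prop:DotqIsFunction} fed into \Cref{prop:FineQI-2} (with the quasi-inverse giving the reverse implication), and statement (2) by reducing to statement (1) through \Cref{lem:malnormal}, \Cref{prop:MalnormalityQIinvariance} and \Cref{lem:QI-Refinement}. The one place you go beyond the paper is in explicitly verifying that each $P\in\calp$ has finite index in $\Comm_G(P)$ before invoking \Cref{lem:QI-Refinement} --- the paper leaves this hypothesis implicit --- and your coarse-intersection argument for it is correct.
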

\begin{proof}
For the first statement, since $\calp$ and $\calq$ are reduced,   \Cref{prop:DotqIsFunction} implies that   $\dot q\colon G/\calp \to H/\calq$ is a bijection. Then \Cref{prop:FineQI-2} implies that   $\hat\Gamma(G,\calp,S)$ is hyperbolic and fine at cone vertices if and only if $\hat\Gamma(H,\calq,T)$ is hyperbolic and fine at cone vertices. Then \Cref{prop:HypEmbHatGamma} concludes the proof of the first statement.

The second statement is a consequence of the first statement as follows. 
That $\calq\hookrightarrow_h H$ implies that $\calq$ is an almost malnormal collection of subgroups in $H$, see \Cref{lem:malnormal}. It follows that  $\calq$ is reduced in $H$. Then, since $\calq$ contains only infinite subgroups, \Cref{prop:MalnormalityQIinvariance} implies that $\calp^\ast$ is reduced. By  \Cref{lem:QI-Refinement}, $q\colon (G,\calp^\ast)\to (H,\calq)$ is a quasi-isometry of pairs. Then $\calq\hookrightarrow_h H$ and the first statement of the proposition imply that $\calp^\ast \hookrightarrow (G,S)$. 
\end{proof}


\section{Hyperbolically embedded subgroups and commensurability}
\label{sec:ThmE}

In this section we prove \Cref{corx:Saturday}. The argument    uses the following proposition which is a strengthening of~\cite[Proposition 2.15]{MaSa21}. It essentially follows from the proof in the cited article; but we have included the proof for the convenience of the reader.

\begin{proposition}\label{prop:FiniteIndexSupergroupIff}
Let $H$ be a finite index subgroup of a finitely generated group $G$, and let $\mathcal{Q}$ be a finite collection of subgroups of $H$.
The following statements are equivalent:
\begin{enumerate}
    \item The inclusion $H\hookrightarrow G$ is a quasi-isometry of pairs $(H,\mathcal{Q}) \hookrightarrow (G, \mathcal{Q})$.
    
    \item For any $Q\in \mathcal{Q}$ and   $g\in G$, there is  $Q'\in \mathcal{Q}$ and $h\in H$ such that $\Hdist_G(gQ  , hQ')<\infty$. 
\end{enumerate}
\end{proposition}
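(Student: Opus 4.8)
The plan is to unwind the definition of a quasi-isometry of pairs in this special case and observe that the only substantive requirement is a \emph{uniform} version of condition (2). Since $H$ has finite index in $G$, the inclusion $\iota\colon H\hookrightarrow G$ is already an $(L,C)$-quasi-isometry for suitable constants, so to test whether $\iota$ is a quasi-isometry of pairs $(H,\mathcal{Q})\to(G,\mathcal{Q})$ we only need a constant $M>0$ making both projections of the relation $\dot\iota=\{(A,B)\in H/\mathcal{Q}\times G/\mathcal{Q}\colon \Hdist_G(A,B)<M\}$ surjective. The projection to $H/\mathcal{Q}$ is automatically surjective: given $A=hQ\in H/\mathcal{Q}$, the same coset read inside $G/\mathcal{Q}$ satisfies $\Hdist_G(hQ,hQ)=0<M$. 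Thus condition (1) is equivalent to the existence of $M>0$ such that every $gQ\in G/\mathcal{Q}$ lies within Hausdorff distance $M$ of some element of $H/\mathcal{Q}$, which (using symmetry of $\Hdist_G$) is precisely a uniform form of (2). With this reformulation the implication (1)$\Rightarrow$(2) is immediate, since the uniform bound supplied by (1) gives, for each fixed $g$ and $Q$, a coset $hQ'\in H/\mathcal{Q}$ with $\Hdist_G(gQ,hQ')<M<\infty$.

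For the converse (2)$\Rightarrow$(1) I would promote the pointwise finiteness in (2) to a single uniform bound by exploiting the two finiteness hypotheses. Fix a finite right transversal $g_1,\dots,g_n$ of $H$ in $G$, so $G=\bigsqcup_{i} Hg_i$. There are only finitely many cosets of the form $g_iQ$ with $1\le i\le n$ and $Q\in\mathcal{Q}$, and (2) supplies for each such coset some $h_{iQ}\in H$ and $Q_{iQ}\in\mathcal{Q}$ with $\Hdist_G(g_iQ,\,h_{iQ}Q_{iQ})<\infty$. Let $M$ exceed the (finite) maximum of these finitely many Hausdorff distances. Given now an arbitrary coset $gQ\in G/\mathcal{Q}$, write $g=hg_i$ with $h\in H$. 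Because the word metric $\dist_G$ is left-invariant, left multiplication by $h$ is an isometry of $G$ and hence preserves Hausdorff distance, so
\[
\Hdist_G\bigl(gQ,\; h\,h_{iQ}Q_{iQ}\bigr)=\Hdist_G\bigl(g_iQ,\; h_{iQ}Q_{iQ}\bigr)<M,
\]
and $h\,h_{iQ}Q_{iQ}\in H/\mathcal{Q}$. This exhibits the uniform constant $M$ witnessing surjectivity of the projection to $G/\mathcal{Q}$, and therefore condition (1).

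I expect the main obstacle to be exactly this last step: converting the qualitative ``finite Hausdorff distance for each individual coset'' in (2) into the quantitative uniform constant $M$ demanded by the definition of a quasi-isometry of pairs. The mechanism that makes it work is that, modulo left translation by $H$, there are only finitely many cosets $gQ$ to examine---this is where both the finiteness of $\mathcal{Q}$ and the finite index of $H$ enter---combined with the left-invariance of the word metric, which lets a single bound over the finitely many transversal representatives control every coset simultaneously.
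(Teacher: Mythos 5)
Your proof is correct and follows essentially the same route as the paper's: both arguments reduce to finitely many $H$-orbit representatives of cosets in $G/\mathcal{Q}$ (your transversal cosets $g_iQ$ play exactly the role of the paper's set $\mathcal{R}$) and then use left-invariance of the word metric to propagate the finitely many bounds to a single uniform constant $M$. The only addition is your explicit check that the projection to $H/\mathcal{Q}$ is automatically surjective, which the paper leaves implicit.
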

\begin{proof}
That (1) implies (2) is trivial.  Assume statement (2). Since $H$ is a finite index subgroup of the finitely generated group $G$, assume  $H\hookrightarrow G$ is an $(L,C)$ quasi-isometry. Since $H$ is finite index in $G$, and $\mathcal{Q}$ is a finite collection,  the $H$-action on $G/\mathcal{Q}$ has finitely many orbits. For $gQ\in G/\mathcal{Q}$, let
\[\Hdist_G(gQ, H/\mathcal{Q}):= \min\left\{\Hdist_G(gQ, hQ') \colon hQ'\in H/\mathcal{Q} \right\}.\]
Let $\mathcal{R}$ be a finite collection of orbit representatives of the $H$-action on $G/\mathcal{Q}$.  By hypothesis, for $gQ \in \mathcal{R}$
 there is $h Q'\in H/ \mathcal{Q}$ such that $\Hdist(gQ,hQ')<\infty$ and therefore 
\[ M = \max\{\Hdist_G(gQ,H/\mathcal{Q}) \colon gQ \in \mathcal{R}\} <\infty \]
is a well defined integer since $\mathcal{R}$ is a finite set. Since the subset $H/\mathcal{Q}$ of $G/\mathcal{Q}$ is $H$-invariant,
\[\Hdist_G(gQ, H/\mathcal{Q}) = \Hdist_G(hgQ, H/\mathcal{Q}) \]
for every $gQ\in \mathcal{R}$ and $h\in H$. Since $\mathcal{R}$ is a collection of representatives of orbits of $G/\mathcal{Q}$, 
\[\Hdist_G(gQ, H/\mathcal{Q}) \leq M\]
for every $gQ \in G/
\mathcal{Q}$. Hence $(H, \mathcal{Q}) \hookrightarrow (G, \mathcal{Q})$ is an $(L,C,M)$  quasi-isometry of pairs .
\end{proof} 

\begin{remark}\label{remark.qi}
Let $G$ be a group and let $T$ and $S$ generating sets with finite symmetric difference. Then the identity map on $G$ is a quasi-isometry $\Gamma(G,T) \to \Gamma(G,S)$. 
\end{remark}

\begin{theorem}[\Cref{corx:Saturday}] \label{cor:Saturday}
Let $H$ be a finite index normal subgroup of a finitely generated group $G$, and let $\calq$ be a finite collection of infinite subgroups of $H$ such that $\calq\hookrightarrow_h (H,T)$.  
Suppose:  
\begin{enumerate}
    \item  The $G$-action by conjugation on $H$ is a uniform quasi-action on  $\Gamma(H,T)$.
    \item The collection  $\{hQh^{-1} \colon h\in H \text{ and } Q\in \calq\}$ is invariant under conjugation by $G$. 
\end{enumerate}
If $\calq^*$ is a refinement of $\calq$ in $G$ and $R$ is a transversal of $H$ in $G$, then $\calq^*\hookrightarrow_h (G, T\cup R)$.
\end{theorem}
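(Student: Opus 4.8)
The plan is to reduce this statement to \Cref{prop:summary} (Theorem~\ref{propx:summary}) by producing a quasi-isometry of pairs between an appropriate refinement in $H$ and the collection $\calq^*$ in $G$, together with a quasi-isometry of the relevant Cayley graphs. Set $S = T \cup R$. The first task is to pass from the inclusion $H \hookrightarrow G$ to a genuine quasi-isometry of \emph{pairs} carrying the hyperbolically embedded structure. Since hypothesis (2) asserts that the $H$-conjugacy-closure $\calq^H := \{hQh^{-1} : h\in H,\ Q\in\calq\}$ is $G$-invariant, I would first invoke \Cref{prop:FiniteIndexSupergroupIff} to conclude that the inclusion $(H,\calq) \hookrightarrow (G,\calq)$ is a quasi-isometry of pairs; the $G$-invariance of $\calq^H$ gives exactly condition (2) of that proposition, since any $G$-translate $gQg^{-1}$ of a coset-representative subgroup stays inside the $H$-orbit up to finite Hausdorff distance.

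Next I would handle the two hypotheses of \Cref{prop:summary} separately. For the Cayley-graph quasi-isometry, hypothesis (1) says the $G$-action by conjugation on $H$ is a uniform quasi-action on $\Gamma(H,T)$, so \Cref{prop:FridayNight} applies directly: with $R$ a transversal and $S = T\cup R$, the inclusion $\Gamma(H,T)\hookrightarrow\Gamma(G,S)$ is a quasi-isometry. This is precisely the second numbered hypothesis of \Cref{prop:summary}, with the inclusion playing the role of $q$. For the quasi-isometry-of-pairs hypothesis, I would combine the conclusion of \Cref{prop:FiniteIndexSupergroupIff} with the refinement machinery: since $\calq\hookrightarrow_h(H,T)$, \Cref{lem:malnormal} shows $\calq$ is almost malnormal in $H$, hence each $Q\in\calq$ is finite index in $\Comm_H(Q)$, and \Cref{lem:QI-Refinement} lets me replace collections by refinements up to quasi-isometry of pairs without changing the hyperbolically embedded conclusion.

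The core of the argument is then to feed everything into the second statement of \Cref{prop:summary}. Taking $q$ to be the inclusion $H\hookrightarrow G$, $\calp = \calq$ viewed as a collection in $G$ (or rather its $G$-refinement), and the generating sets $T$ and $S = T\cup R$, I have verified that $q$ is a quasi-isometry of pairs $(H,\calq)\to(G,\calq)$ and a quasi-isometry $\Gamma(H,T)\to\Gamma(G,S)$. Since $\calq$ consists of infinite subgroups and $\calq\hookrightarrow_h(H,T)$, the second conclusion of \Cref{prop:summary} yields that the refinement $\calq^*$ of $\calq$ \emph{in $G$} satisfies $\calq^*\hookrightarrow_h(G,S) = (G, T\cup R)$, which is exactly the desired conclusion.

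The main obstacle I anticipate is bookkeeping around which group the refinement is taken in and ensuring the two hypotheses of \Cref{prop:summary} really are met by a \emph{single} map $q$ (the inclusion) with matching collections. In particular, \Cref{prop:summary}'s second statement refines $\calp$ (the source collection) in the source group $G$, so I must be careful that the collection I call $\calp$ in $G$ is $\calq$ itself (a collection of subgroups of $H\leq G$) and that its refinement in $G$ is the $\calq^*$ of the theorem statement; hypothesis (2) of the theorem is exactly what guarantees that refining in $G$ versus refining in $H$ is compatible with the quasi-isometry of pairs. Verifying this compatibility — that the $G$-invariance of $\calq^H$ makes $\dot q$ behave correctly on $G/\calq$ versus $H/\calq$ — is the delicate point, and is where \Cref{prop:FiniteIndexSupergroupIff} does the essential work.
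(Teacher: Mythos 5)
Your proposal is correct and follows essentially the same route as the paper's proof: hypothesis (2) plus \Cref{prop:FiniteIndexSupergroupIff} gives the quasi-isometry of pairs $(H,\calq)\hookrightarrow(G,\calq)$, hypothesis (1) plus \Cref{prop:FridayNight} gives the Cayley-graph quasi-isometry $\Gamma(H,T)\to\Gamma(G,T\cup R)$, \Cref{lem:malnormal} and \Cref{lem:QI-Refinement} handle the passage to the refinement $\calq^*$, and everything is fed into \Cref{prop:summary}. The only details the paper adds beyond your sketch are that one first enlarges $T$ to an actual generating set of $H$ (needed for \Cref{prop:FridayNight}, and harmless for $\calq\hookrightarrow_h(H,T)$), and that the finite-index condition required by \Cref{lem:QI-Refinement} is finite index in $\Comm_G(Q)$ rather than $\Comm_H(Q)$, which is exactly the bookkeeping point you flagged and follows from $Q=\Comm_H(Q)=\Comm_G(Q)\cap H$ together with $[G:H]<\infty$.
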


\begin{proof}
Since $H$ is finitely generated, by adding a finitely many elements we can assume that $T$ generates $H$.  Note that this preserves  $\calq\hookrightarrow_h (H,T)$ by  \cite[Cor. 4.27]{DGOsin2017}, and the quasi-isometry type of $\Gamma(H,T)$ by \Cref{remark.qi}. Under this assumption, the conclusion will follow from the second statement of \Cref{prop:summary} applied to the  quasi-isometry of finitely generated groups given by the inclusion $H\hookrightarrow G$.

Since $\calq\hookrightarrow_h (H,T)$, $\calq$ is an almost malnormal collection, see \Cref{lem:malnormal}. The assumption that $\calq$ consist only of infinite subgroups implies that for any $Q\in\calq$,
\[Q=\Comm_H(Q)=\Comm_G(Q)\cap H.\]
Since $H$ is finite index in $G$, we have that $Q$ is finite index in $\Comm_G(Q)$. Then,   \Cref{lem:QI-Refinement} implies  that the identity map on $G$ is a quasi-isometry of pairs $(G,\calq) \xrightarrow{}(G,\calq^\ast)$. On the other hand, since the collection $\{hQh^{-1}\colon h\in H \text{ and } Q\in\calq\}$ is invariant under conjugation by elements of $G$, we have for any $g\in G$ and $Q\in\calq$ there is $h\in H$ such that $gQg^{-1}=hQ'h^{-1}$ and hence
\[\Hdist_G(gQ,hQ')\leq \Hdist_G(gQ,Q^g)+ \Hdist_G(Q^g,(Q')^h)+ \Hdist((Q')^h,hQ')<\infty.\]
\Cref{prop:FiniteIndexSupergroupIff}  implies that   $H\hookrightarrow G$ is a quasi-isometry of pairs  $(H,\calq)\to (G,\calq)$. It follows that  $H\hookrightarrow G$ is a quasi-isometry of pairs $(H,\calq) \to (G,\calq^\ast)$ as it is the composition $(H,\calq)\hookrightarrow (G,\calq) \xrightarrow{}(G,\calq^\ast)$. Let $R$ be a  transversal of $H$ in $G$ and let $S=T\cup R$. Since the $G$-action by conjugation on $H$ is uniform on $\Gamma(H,T)$, \Cref{prop:FridayNight} implies that   $H\hookrightarrow G$ is a quasi-isometry  $\Gamma(H,T) \to (G,S)$. The hypothesis of \Cref{prop:summary} has been verified and therefore,  $\calq\hookrightarrow_h (H,T)$     implies  $\calq^*\hookrightarrow_h (G,S)$.
\end{proof}

\section{Semi-direct products and Hyperbolically embedded subgroups}
\label{sec:ThmF}

In this section we will prove \Cref{thmx.hypemb.semidirect} about semi-direct products.  The hypothesis of the following proposition and theorem reflects the issues posed by the example of Minasyan and Osin (\Cref{ex.MinasyanOsin}).

\begin{prop}\label{prop.semidirect.qi02}
Let $A$ be a group with (not necessarily finite) generating set $T$, let  $\mathcal{H}$ be a  collection of subgroups, and let $F\leq \mathsf{Aut}(A)$ be a finite subgroup. Suppose that $T$ and $\mathcal{H}$ are $F$-invariant, and the $F$-action on $\mathcal{H}$ is free. Let $\calh_F$ be a collection of representatives of $F$-orbits in $\calh$.  Then the inclusion $A\hookrightarrow A\rtimes F$ induces:
\begin{enumerate}
    \item a quasi-isometry $\Gamma(A,T) \to \Gamma(A\rtimes F, T\cup F)$;
    
   \item  and, if $A$ is finitely generated, a quasi-isometry of pairs $(A,\mathcal{H}) \to (A\rtimes F,\mathcal{H}_F)$.
    
\end{enumerate}
\end{prop}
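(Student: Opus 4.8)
The plan is to realize the inclusion $\iota\colon A\hookrightarrow A\rtimes F$, $a\mapsto(a,1)$, as one half of an explicit quasi-inverse pair, the other half being the retraction $p\colon A\rtimes F\to A$, $(a,f)\mapsto a$, and to transport both paths and cosets through $p$. I identify $A$ with the index-$|F|$ normal subgroup $\{(a,1)\}$ and $F$ with $\{(1,f)\}$, and use $(a_1,f_1)(a_2,f_2)=(a_1f_1(a_2),f_1f_2)$. For statement (1) the bound $\dist_{(A\rtimes F,T\cup F)}(\iota a,\iota b)\le\dist_{(A,T)}(a,b)$ is immediate from $T\subseteq T\cup F$. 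For the reverse bound I would take a geodesic $[v_0,\dots,v_\ell]$ of $\Gamma(A\rtimes F,T\cup F)$ from $(a,1)$ to $(b,1)$, write $v_i=(c_i,f_i)$, and apply $p$: a $T$-edge sends $(c_i,f_i)$ to $(c_if_i(t),f_i)$, so $p$ sends it to $\{c_i,c_if_i(t)\}$, which is an edge of $\Gamma(A,T)$ precisely because $f_i(t)\in T$ by the $F$-invariance of $T$; an $F$-edge fixes the $A$-coordinate and is collapsed. Thus $p$ carries the geodesic to a path of length at most $\ell$ from $a$ to $b$, giving $\dist_{(A,T)}(a,b)\le\dist_{(A\rtimes F,T\cup F)}(\iota a,\iota b)$, so $\iota$ is an isometric embedding; it is $1$-dense because $(a,f)$ is joined to $\iota a$ by the $F$-edge $(1,f)$. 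This argument uses neither finite generation of $A$ nor uniformity of any conjugation action, which is what lets it evade the Minasyan--Osin obstruction of \Cref{ex.MinasyanOsin}: $F$-invariance of $T$ does the work instead.

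For statement (2), finite generation of $A$ makes $A\rtimes F$ finitely generated, and since $A$ has finite index, $\iota$ is a quasi-isometry for finite word metrics; it remains to verify that the relation $\dot\iota\subseteq A/\mathcal{H}\times(A\rtimes F)/\mathcal{H}_F$ has surjective projections. I would record a coset dictionary through $p$. Writing $g=(a_0,f)$ and $H'\in\mathcal{H}_F$, one has $gH'=\{(a_0f(h),f)\colon h\in H'\}$, so $p(gH')=a_0\,f(H')$ is a coset of $f(H')\in\mathcal{H}$ (this is where $F$-invariance of $\mathcal{H}$ enters), and every element of $gH'$ lies within $D:=\max_{\phi\in F}\dist_{A\rtimes F}(e,(1,\phi))<\infty$ of its image under $\iota\circ p$; hence $\Hdist_{A\rtimes F}(gH',\iota(p(gH')))\le D$, which shows the projection of $\dot\iota$ onto $(A\rtimes F)/\mathcal{H}_F$ is surjective. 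Conversely, given $aH$ with $H\in\mathcal{H}$, I would take the representative $H'\in\mathcal{H}_F$ of the $F$-orbit of $H$ and some $\phi\in F$ with $\phi(H')=H$; then $(a,\phi)H'$ satisfies $p((a,\phi)H')=aH$ and so $\Hdist_{A\rtimes F}(\iota(aH),(a,\phi)H')\le D$, giving surjectivity of the projection onto $A/\mathcal{H}$. With $M=D+1$ this exhibits $\iota\colon(A,\mathcal{H})\to(A\rtimes F,\mathcal{H}_F)$ as a quasi-isometry of pairs.

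I expect the only real obstacle to be the coset bookkeeping in the semidirect product---correctly computing $p$ on a coset $(a_0,f)H'$, confirming its image is a bona fide $\mathcal{H}$-coset in $A$, and checking that the finite bound $D$ (available because $|F|<\infty$) makes every relevant Hausdorff distance uniform---rather than anything conceptual. It is worth noting that freeness of the $F$-action on $\mathcal{H}$ is not needed for either claim here; it only ensures $|\mathcal{H}|=|F|\cdot|\mathcal{H}_F|$ and becomes relevant when this proposition is combined with \Cref{prop:summary} to deduce \Cref{thmx.hypemb.semidirect}.
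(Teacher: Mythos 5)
Your proposal is correct and follows essentially the same route as the paper's proof: the same retraction $p(af)=a$ as quasi-inverse, the same use of $F$-invariance of $T$ to push edges of $\Gamma(A\rtimes F,T\cup F)$ down to $\Gamma(A,T)$, and the same coset dictionary with the uniform bound $\max_{f\in F}\dist(1,f)$ for the pair statement. Your side observation that freeness of the $F$-action is not actually used here (only in identifying $\mathcal{H}_F$ as a refinement for \Cref{thmx.hypemb.semidirect}) is also accurate.
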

\begin{proof}
To prove the first statement, let $S=T\cup F$ and let $\dist_{T}$ and $\dist_S$ be the word metrics on $A$ and $A\rtimes F$  induced by $T$, and $S$ respectively. Let $q
\colon A \hookrightarrow A\rtimes F$ be the inclusion, and let  $\bar q\colon   A\rtimes F \to A$ such that for $a\in A$ and $f\in F$,   $\bar q(af)=a$. Note that $\bar q$ is a well defined $A$-equivariant map since each element of $A\rtimes F$ can be expressed as a product $af$ in a unique way. Observe that $\bar q\circ q$ is the identity on $A$, and $q\circ \bar q$ is at distance one from the identity map on $A\rtimes F$ with respect to $\dist_S$.  Since the Cayley graph $\Gamma(A,T)$ is a subgraph of $\Gamma(A\rtimes F, T\cup F)$, it is immediate that for any $u,v\in A$, $\dist_S(q(u),q(v))\leq \dist_T(u,v)$.
To conclude the proof of the statement, we show that for any $u,v\in A\rtimes F$, $\dist_T(\bar q(u),\bar q(v))\leq \dist_S(u,v)$. Note that it is enough to consider the case that $\dist_S(u,v)=1$.  Let $w_1,w_2\in A\rtimes F$ such that $\dist_S(w_1,w_2)=1$. Then $w_1=a_1f_1$ and $w_2=a_2f_2$ and $\bar q(w_i)=a_i$.  It follows that $g=(a_1f_1)^{-1}a_2f_2\in T\cup F$. Observe that 
\[g= f_1^{-1}a_1^{-1}a_2f_2=(a_1^{-1})^{f_1^{-1}}f_1^{-1}a_2f_2=(a_1^{-1})^{f_1^{-1}}a_2^{f_1^{-1}}f_1^{-1}f_2=(a_1^{-1}a_2)^{f_1^{-1}}f_1^{-1}f_2\in T\cup F.\]
There are two cases, either $g\in T$ or $g\in F$, since $T\cap F=\emptyset$. We regard $T\cup F$ and $F$ as a subset and a subgroup of $A\rtimes F$ respectively. If $g\in T$, then $f_1=f_2$ and hence $(a_1^{-1}a_2)^{f_1^{-1}} \in T$; since $T$ is $F$-invariant, $a_1$  and $a_2$ are adjacent in $\Gamma$, and hence $\dist_T(\bar q(w_1), \bar q(w_2))=1$. If $g\in F$,  then $a_1=a_2$ and hence $\dist_T(\bar q(w_1), \bar q(w_2))=0$.  

For the second statement, suppose that $A$ is finitely  generated and let  $\dist$ denote word metric  on  $A\rtimes F$ induced by  finite generating set, and  let  $\Hdist_{A\rtimes F}$ be the induced Hausdorff distance. 
Let $M=\max_{f\in F}\dist(1, f)$.
Since the inclusion $A\hookrightarrow A\rtimes F$ is a quasi-isometry of finitely generated groups and $\calh_F \subset \calh$, it is enough to   prove that for any   $H\in\calh$ there is a left coset in   $(A\rtimes F)/\calh_F$ at Haudorff distance at most $M$ in $A\rtimes F$.
Let $H\in \calh$. Since the $F$-action on $\calh$ by conjugation is free, there is a unique $f\in F$ and a unique $K\in\calh_F$ such that $H=fKf^{-1}$. Observe that
\[  \Hdist(H, fK) = \Hdist(fKf^{-1}, fK) \leq \dist(1, f^{-1}) \leq M,\]
and this completes the proof.
\end{proof}

\begin{thm}[\Cref{thmx.hypemb.semidirect}]\label{thm.hypemb.semidirect}
Let $A$ be a finitely generated group with (not necessarily finite) generating set $T$, and let $\calh$ be a finite collection of infinite subgroups such that  $\mathcal{H}\hookrightarrow_h (A,T)$. If $F\leq \mathsf{Aut}(A)$ is finite, $T$ and $\calh$ are   $F$-invariant and the $F$-action on  $\mathcal{H}$ is free, then $\calh_F\hookrightarrow_h(A\rtimes F,T\cup F)$ where $\calh_F$ is collection of representatives of $F$-orbits in $\calh$.
\end{thm}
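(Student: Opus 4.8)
The plan is to obtain the conclusion as a single application of the first item of \Cref{prop:summary}, applied to the inclusion $\iota\colon A\hookrightarrow A\rtimes F$. This map is a quasi-isometry of finitely generated groups because $A$ has finite index $|F|$ in $A\rtimes F$. The two numbered hypotheses of \Cref{prop:summary} are handed to us directly by \Cref{prop.semidirect.qi02}: since $T$ and $\calh$ are $F$-invariant and $F$ acts freely on $\calh$, its first item gives that $\iota\colon\Gamma(A,T)\to\Gamma(A\rtimes F,T\cup F)$ is a quasi-isometry, and its second item (using that $A$ is finitely generated) gives that $\iota\colon(A,\calh)\to(A\rtimes F,\calh_F)$ is a quasi-isometry of pairs. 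Thus, taking $G=A$ and $H=A\rtimes F$ with collections $\calp=\calh$ and $\calq=\calh_F$ and generating sets $T$ of $A$ and $T\cup F$ of $A\rtimes F$, it only remains to verify that both $\calh$ and $\calh_F$ are reduced collections; the desired conclusion $\calh_F\hookrightarrow_h(A\rtimes F,T\cup F)$ will then follow from $\calh\hookrightarrow_h(A,T)$ via the equivalence in the first item of \Cref{prop:summary}.

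That $\calh$ is reduced is immediate: since $\calh\hookrightarrow_h(A,T)$, \Cref{lem:malnormal} shows that $\calh$ is almost malnormal, and an almost malnormal collection is reduced. The substance of the argument, and the step I expect to require the most care, is to verify that $\calh_F$ is reduced in $A\rtimes F$, since here I must control conjugation by the $F$-part together with the orbit bookkeeping built into $\calh_F$.

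To this end I would fix $P,Q\in\calh_F$ and $g\in A\rtimes F$ with $P$ and $gQg^{-1}$ commensurable, and write $g=af$ with $a\in A$ and $f\in F$. Using the semidirect-product relation $fqf^{-1}=f(q)$ for $q\in A$, one computes $gQg^{-1}=a\,f(Q)\,a^{-1}$, where $f(Q)$ denotes the image of $Q$ under the automorphism $f$; by $F$-invariance of $\calh$ we have $f(Q)\in\calh$. Now both $P$ and $a\,f(Q)\,a^{-1}$ are subgroups of $A$, and commensurability of two subgroups of $A$ is intrinsic (it only involves their intersection and the indices thereof), so it is insensitive to whether it is tested inside $A$ or inside $A\rtimes F$. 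Hence $P$ and $a\,f(Q)\,a^{-1}$ are commensurable \emph{in} $A$, with $P,f(Q)\in\calh$ and $a\in A$; since $\calh$ is reduced in $A$, this forces $P=f(Q)$ and $a\in P$.

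Finally I would unwind what $P=f(Q)$ says. It means that $P$ and $Q$ lie in the same $F$-orbit of $\calh$; as $\calh_F$ contains exactly one representative from each orbit and $P,Q\in\calh_F$, we conclude $P=Q$. Then $P=f(P)$, and since $F$ acts freely on $\calh$ the stabiliser of $P$ is trivial, so $f=1$. Therefore $g=af=a\in P$, which is precisely the reducedness condition for $\calh_F$. With both $\calh$ and $\calh_F$ now known to be reduced, the first item of \Cref{prop:summary} applies and, together with $\calh\hookrightarrow_h(A,T)$, yields $\calh_F\hookrightarrow_h(A\rtimes F,T\cup F)$, completing the proof.
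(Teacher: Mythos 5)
Your proof is correct, and it takes a slightly but genuinely different route from the paper's at the decisive step. Both arguments feed the two conclusions of \Cref{prop.semidirect.qi02} into \Cref{prop:summary}, but the paper invokes the \emph{second} statement of \Cref{prop:summary}: it observes (via \Cref{ex:refinement}, using almost malnormality of $\calh$ from \Cref{lem:malnormal} and freeness of the $F$-action) that $\calh_F$ is a refinement of $\calh$ in $A\rtimes F$, and then lets the refinement machinery (\Cref{lem:QI-Refinement}, \Cref{prop:MalnormalityQIinvariance}) do the work. You instead invoke the \emph{first} statement, which requires you to check by hand that $\calh_F$ is reduced in $A\rtimes F$; your computation --- writing $g=af$, using $gQg^{-1}=a\,f(Q)\,a^{-1}$ with $f(Q)\in\calh$ by $F$-invariance, applying reducedness of $\calh$ in $A$ (which follows from almost malnormality since all subgroups involved are infinite), and then using the orbit-representative property and freeness to force $P=Q$ and $f=1$ --- is complete and correct. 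What your version buys is twofold: it makes explicit exactly where the freeness hypothesis is used (it is precisely what kills $f$ in the reducedness check), and it applies \Cref{prop:summary} in the same direction $A\to A\rtimes F$ as the quasi-isometry of pairs produced by \Cref{prop.semidirect.qi02}, whereas the paper's appeal to the second statement of \Cref{prop:summary} implicitly requires reading that statement against the quasi-inverse $A\rtimes F\to A$ (since its conclusion concerns the refinement of the collection in the \emph{domain}). The cost is that your argument is specific to this situation, while the paper's reuses general lemmas it has already established.
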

\begin{proof}
By \Cref{prop.semidirect.qi02}, the inclusion $A\hookrightarrow A\rtimes F$ induces a quasi-isometry $\Gamma(A,T) \to \Gamma(A\rtimes F, T\cup F)$, and a quasi-isometry of pairs $(A,\calh) \to (A\rtimes F, \calh_F)$. Since $\calh\hookrightarrow_h A$, the collection $\calh$ is almost malnormal in $A$; then the assumption that $F$ acts freely on $\mathcal{H}$ implies that a refinement of $\calh$ in $A\rtimes F$ is $\mathcal{H}_F$, this was observed in \Cref{ex:refinement}.
Since $\calh$ contains only infinite subgroups and $\calh\hookrightarrow_h A$,  \Cref{prop:summary} implies that $\calh_F \hookrightarrow_h (A\rtimes F, T\cup F)$.   
\end{proof}

\section{Concluding remarks}\label{sec conclusion}

A positive answer to the following question would allows us to drop the first hypothesis of \Cref{propx:summary} for the relevant groups.

\begin{question}\label{quest qichar}
    Let $G$ be a finitely generated NRH acylindrically hyperbolic group.  Does $G$ contain a q.i.-characteristic collection of hyperbolically embedded subgroups?
\end{question}

It is possible that $\cala\calh$-accessibility as defined in \cite{ABO2019} may be necessary for a positive answer to \Cref{quest qichar}.  Note that this property does not always hold (see \cite{Abbott2016}).
    
It is tempting to weaken the definition of a quasi-isometry of pairs $q\colon (G,\calp)\to (H,\calq)$ to remove the uniform constant $M$ bounding the Hausdorff distances on the cosets and instead ask the relation
\[ \dot{q}=\{(A,B)\in G/\calp\times H/\calq : \Hdist_H(q(A),B)<\infty\}\]
satisfies that the projections into $G/\calp$ and $H/\calq$ are surjective.  We shall call the map $q$ in this modified definition an \emph{almost quasi-isometry of pairs} following \cite[Section~5]{HuMaSa21survey}.

Indeed, there is work of Margolis \cite{Margolis2021} where the main theorems do not require require this additional hypothesis.  However, Margolis shows that the hypotheses assumed in the main results of loc. cit. in fact imply that such a constant $M$ exists (see \cite[Theorem~4.1]{Margolis2021}).  Note that our results in this article rely on the existence of a constant $M$ --- primarily due to the use of \cite[Proposition~5.6]{HuMaSa21a}.  Thus, we raise the following question.

\begin{question}
    Let $G$ and $H$ be finitely generated groups with finite collections of subgroups $\calp$ and $\calq$ respectively.  When is an $(L,C)$-almost quasi-isometry of pairs $q\colon(G,\calp)\to(H,\calq)$ an $(L,C,M)$-quasi-isometry of pairs?
\end{question}

Motivated by results of \cite{BHS21}, the referee of the article suggested that it might be interesting to investigate other  relaxations of the definition of a quasi-isometry of pairs (\Cref{defn:quasi-isometry-pairs}). For example in the sense that the image of every element of the collection $\mathcal{A}$ lies at uniform Hausdorff
distance of the union of finitely many elements in the collection $\mathcal{B}$. Having a more general notion could allow a broader strategy towards tackling \Cref{question.1} based on the methods in this article.



\AtNextBibliography{\small}
\printbibliography

\end{document}